\documentclass[11pt]{article}

\usepackage{amsthm}
\usepackage{amssymb}
\usepackage{tikz}
\usepackage{verbatim}
\usepackage[numbers]{natbib}

\usepackage{rotating}
\usepackage{url}
\usepackage{float}
\usepackage{graphicx}
\usepackage{color}
\usepackage{latexsym}
\usepackage{amsfonts}
\usepackage{amsmath}
\usepackage{psfrag}
\usepackage{mathrsfs}
\usepackage{algpseudocode}
\usepackage[ruled]{algorithm}
\usepackage{caption}

\pagestyle{plain}
\textwidth 16. truecm   
\textheight 21.5 truecm  
\hoffset = -1.5 truecm \voffset = -1.5 truecm
\newtheorem{proposition}{Proposition}

\newtheorem{theorem}{Theorem}

\newtheorem{lemma}{Lemma}


\usepackage{mathrsfs}

\def\R{\mathbb{R}}
\def\Z{\mathbb{Z}}
\def\N{\mathbb{N}}

\def\eps{\epsilon}
\def\xstar{x^{\star}}

 \setbox\strutbox=\hbox{\vrule height7pt depth2pt width0pt}

\def\square{{\setbox0=\hbox{X}\hbox to \ht0{\vrule\hss\vbox to \ht0{
  \hrule width \ht0\vfil\hrule width \ht0}\vrule}}}

\begin{document}
\pagestyle{empty}

\vskip 2cm \begin{center}{\huge A Frank-Wolfe Based Branch-and-Bound Algorithm\\[1ex] for Mean-Risk Optimization}\end{center}
\par\bigskip
\centerline{\Large C. Buchheim$^\dag$, M. De Santis$^\ddag$, F. Rinaldi$^*$, L. Trieu$^\dag$}
\par\bigskip\bigskip

\centerline{$^{\dag}$Fakult\"at f\"ur Mathematik } \centerline{TU
    Dortmund } \centerline{Vogelpothsweg 87 - 44227 Dortmund - Germany}
\par\medskip
\centerline{$^{\ddag}$Institut f\"ur Mathematik}  \centerline{Alpen-Adria-Universit\"at Klagenfurt}
           \centerline{Universit\"atsstrasse 65-67, 9020 Klagenfurt - Austria}
\par\medskip
\centerline{ $^{*}$Dipartimento di Matematica}\centerline{Universit\`a  di Padova }\centerline{Via Trieste, 63 - 35121 Padova -
Italy}
\par\medskip
\centerline{e-mail (Buchheim): christoph.buchheim@tu-dortmund.de}
\centerline{e-mail (De Santis): marianna.desantis@aau.at}
\centerline{e-mail (Rinaldi): rinaldi@math.unipd.it}
\centerline{e-mail (Trieu): long.trieu@math.tu-dortmund.de}

\par\bigskip\noindent {\small \centerline{\bf Abstract} 
We present an
  exact algorithm for mean-risk optimization subject to a budget
  constraint, where decision variables may be continuous or integer.
  The risk is measured by the covariance matrix and weighted by an
  arbitrary monotone function, which allows to model risk-aversion in
  a very individual way. We address this class of convex mixed-integer
  minimization problems by designing a branch-and-bound algorithm, 
  where at each node, the continuous relaxation is solved 
  by a non-monotone Frank-Wolfe type algorithm with away-steps. 
  Experimental results on portfolio optimization problems show that our approach 
  can outperform the MISOCP solver of CPLEX 12.6 for instances where a linear 
  risk-weighting function is considered.

\begin{quote}

\end{quote} }
\par\noindent
{\bf Keywords.} mixed-integer programming, mean-risk optimization, global optimization
\par\bigskip\noindent
{\bf AMS subject classifications.} 90C10, 90C57, 90C90

\pagestyle{plain} \setcounter{page}{1}


\section{Introduction}

We consider mixed-integer knapsack problems of the form
\begin{equation*}
      \begin{array}{l l}
        \max & c^\top y\\[1ex]
        \textnormal{~s.t. }& a^\top y\leq b\\
        & y\ge 0 \\
        & y_i\in \Z~~\forall{ i\in I},
      \end{array}
\end{equation*}
where~$y\in\R^n$ is the vector of non-negative decision variables,
the index set $I\subseteq\{1,\dots,n\}$ specifies which variables have
to take integer values. In many practical applications, the objective
function coefficients~$c\in\R^n$ are uncertain, while~$a\in\R_+^n$
and~$b\in\R_+$ are known precisely. E.g., in portfolio optimization
problems, the current prices~$a$ and the budget~$b$ are given, but the
returns~$c$ are unknown at the time of investment. The robust
optimization approach tries to address such uncertainty by considering
worst-case optimal solutions, where the worst-case is taken over a
specified set of probable scenarios called the \emph{uncertainty
  set}~$U$ of the problem. Formally, we thus obtain the problem
\begin{equation}\label{prob:robust}
      \begin{array}{l l}
        \max & {\displaystyle\min_{c\in U}}~c^\top y\\[2ex]
        \textnormal{~s.t. }& a^\top y\leq b\\
        & y\ge 0 \\
        & y_i\in \Z~~\forall{ i\in I}.
      \end{array}
\end{equation}
The coefficients~$c_i$ may also be interpreted as random variables.
Assuming a multivariate normal distribution, a natural choice for the
set~$U$ is an ellipsoid defined by the means~$r\in\R^n$ and a
positive definite covariance matrix~$M\in\R^{n\times n}$ of~$c$. 
In this case,
Problem~\eqref{prob:robust} turns out to be equivalent (see, e.g.,~\cite{bertsimas2004}) to the
non-linear knapsack problem
\begin{equation}\label{prob:robust2}
      \begin{array}{l l}
        \max & r^\top y-\Omega\sqrt{y^{\top}M y}\\[1ex]
        \textnormal{~s.t. }& a^\top y\leq b\\
        & y\ge 0 \\
        & y_i\in \Z~~\forall{ i\in I},
      \end{array}
\end{equation}
where the factor~$\Omega\in \R$ corresponds to the chosen confidence
level. It can be used to balance the mean and the risk in the
objective function and hence to model the risk-aversion of the
user. Ellipsoidal uncertainty sets have been widely considered in
robust optimization~\cite{atamtuerk2008,baumann2014, bertsimas2004}.

In fact, mean-risk models such as~\eqref{prob:robust2} have been
studied intensively in portfolio optimization, since Markowitz
addressed them in his seminal paper dating back to
1952~\citep{markowitz1952}. Originally, the risk term was often given
as~$y^{\top}M y$ instead of~$\sqrt{y^{\top}M y}$, which generally
leads to a different optimal balance between mean and risk.  In our
approach, we allow to describe the weight of the risk by any convex,
differentiable and non-decreasing function
$h:\mathbb{R_+}\rightarrow\mathbb{R}$. Typical choices for the
function $h$ could be $h(t)=\Omega t$, yielding~\eqref{prob:robust2},
or $h(t)=\Omega t^2$, which gives a convex MIQP problem. However, it
may also be a reasonable choice to neglect small risks while
trying to avoid a large risk as far as possible, this could be
modeled by an exponential function
\[h(t)=\begin{cases}0 &\ t\leq\gamma \\ \exp(t-\gamma)-(t-\gamma+1) &\ t>\gamma\;.\end{cases}\]
In summary, our aim is to compute exact solutions for problems of the form
\begin{equation}\label{prob:gen_cap_bud}
      \begin{array}{l l}
        \max & r^\top y-h(\sqrt{y^{\top}M y})\\[1ex]
        \textnormal{~s.t. }& a^\top y\leq b\\
        & y\ge 0 \\
        & y_i\in \Z~~\forall{ i\in I}.
      \end{array}
\end{equation}

\subsection{Our contribution}
The main contribution of this paper is an exact algorithm to solve
Problem~\eqref{prob:gen_cap_bud}, i.e.  a class of convex nonlinear
mixed-integer programming problems.  We propose a branch-and-bound
method that suitably combines a Frank-Wolfe like
algorithm~\cite{frank1956algorithm} with a branching strategy already
succesfully used in the context of mixed-integer programming problems
(see~\cite{buchheim2016p} and references therein).

Our approach for solving the continuous relaxation in
each subproblem (i.e. the problem obtained by removing the integrality
constraints) exploits the simple structure of the feasible set
of~\eqref{prob:gen_cap_bud} as well as the specific structure of the
objective function. It uses away-steps as proposed
by~\citet{Guelat1982} as well as a non-monotone line search.

Our motivation to choose a Frank-Wolfe like method is twofold.  On the
one hand, the algorithm, at each iteration, gives a valid dual bound
for the original mixed-integer nonlinear programming problem, thus
enabling fast pruning of the nodes in the branch-and-bound tree. On
the other hand, the running time per iteration is very low, because
the computation of the descent direction and the update of the
objective function can be performed in an efficient way, as it will be
further explained in the next sections.  These two properties, along
with the possibility of using warmstarts, are the key to a fast
enumeration of the nodes in the branch-and-bound algorithm we have
designed.

\subsection{Organization of the paper}
The remaining sections of the paper are organized as follows. 
In Section~\ref{sec:FW} we describe a modified Frank-Wolfe method 
to efficiently compute the dual bounds for the node relaxations. 
The section also includes an in-depth convergence analysis of the algorithm. 
In Section~\ref{bandb} we shortly explain the main ideas of our branch-and-bound algorithm, including 
the branching strategy, upper and lower bound computations and several effective warmstart strategies 
to accelerate the dual bound computation. In Section~\ref{sec:exp} we test our algorithm on 
real-world instances. We show computational results and compare the performances of our algorithm and of~\texttt{CPLEX 12.6} 
for different risk-weighting functions $h$.  Finally, in Section~\ref{sec:concl} we summarize the results and give some conclusions.

\section{A modified version of the Frank-Wolfe method for the fast
  computation of valid dual bounds}
\label{sec:FW}

A continuous convex relaxation of (the minimization version of)
Problem~\eqref{prob:gen_cap_bud}, simply obtained by removing the
integrality constraints in the original formulation, is the following:
\begin{equation}\label{ContRel}
      \begin{array}{l l}
        \min & h\Big(\sqrt{y^\top M y}\Big) - r^\top y\\[1ex]
        \textnormal{~s.t. }& a^\top y\leq b\\
        & y\ge 0\;.
      \end{array}
\end{equation}
By the transformation $y_i=\frac{b}{a_i} x_i$, Problem~\eqref{ContRel} becomes
\begin{equation}\label{ContRel0}
      \begin{array}{l l}
        \min & f(x) = h\Big(\sqrt{x^\top Q x}\Big) - \mu^\top x\\[1ex]
        \textnormal{~s.t. }& \mathbf{1}^\top x \leq 1\\
        & x\ge 0
      \end{array}
    \end{equation}
where $Q_{ij} = \frac {b^2}{a_ia_j}M_{ij}$, $\mu_i = \frac{b}{a_i}
r_i$ and $\mathbf{1} = (1,\ldots, 1)^\top$ is the $n$-dimensional vector with all entries equal to one.  
For the following, let $S = \{ x\in \R^n: \mathbf{1}^\top x\le 1, x\ge
0\}$ denote the feasible set of~\eqref{ContRel0}.

In this section, we consider the Frank-Wolfe
algorithm with away-steps proposed by~\citet{Guelat1982}, and define
a non-monotone version for
solving Problem~\eqref{ContRel0}.  We also analyze its convergence
properties. This algorithm is then embedded into our branch-and-bound
framework. 

The original method described in~\citep{Guelat1982} uses an exact
line search to determine, at a given iteration, the stepsize along the
descent direction that yields the new iterate. When the exact line
search is too expensive (i.e.\ too many objective function and
gradient evaluations are required), different rules can be used for
the stepsize calculation; see e.g.~\citep{freund2014new}. In
particular, inexact line search methods can be applied to calculate
the stepsize~\citep{dunn1980convergence}, such as the Armijo or
Goldstein line search rules. Typical line search algorithms try a
sequence of candidate values for the stepsize, stopping as soon as
some well-defined conditions on the resulting reduction of the
objective function value are met. 
Since the evaluation of the objective function at the trial points 
can be performed in constant time (see Section~\ref{sec:nmarm}), line search methods are inexpensive 
in our context. Furthermore, from our numerical experience, using a non-monotone Armijo line search turned out to be the best
choice in practice. With this choice, a stepsize that yields a (safeguarded) growth of 
the objective function can be accepted (see e.g.~\citep{Gao2004,grippo1986nonmonotone,grippo1989truncated,GrippoSciandrone2002}).
 
The outline of our approach is given in Algorithm~\ref{NM-MFW}. At
each iteration~$k$, the algorithm first computes a descent direction,
choosing among a standard toward-step and an away-step direction, as
clarified in Section~\ref{sec:compdir}. Then, in case optimality
conditions are not satisfied, it calculates a stepsize along the given
direction by means of a non-monotone line search, see
Section~\ref{sec:nmarm}, updates the point, and starts a new
iteration.
\begin{algorithm}
  \caption{NM-MFW}
  \label{NM-MFW}
  \begin{algorithmic}
    \par\vspace*{0.1cm}
  \item$1$\hspace*{0.5truecm} Choose a suitable starting point $x^0\in S$
  \item$2$\hspace*{0.5truecm} {\bf For } $k=0,1,\ldots$
  \item$3$\hspace*{1.0truecm} Compute a descent direction $d^k$
  \item$4$\hspace*{1.0truecm} {\bf If} $\nabla f(x^k)^\top d^k = 0$ {\bf then} \textbf{STOP}
  \item$5$\hspace*{1.0truecm} Calculate a stepsize $\alpha^k \in(0,1]$ by means of a line search
  \item$6$\hspace*{1.0truecm} Set $x^{k+1}=x^k+\alpha^k d^k$
  \item$7$\hspace*{0.4truecm} {\bf End For }
    \par\vspace*{0.1cm}
  \end{algorithmic}
\end{algorithm}
    
In Section~\ref{sec:zero_opt}, we will discuss how to decide whether
the origin is an optimal solution of Problem~\eqref{ContRel0}. If this
is not the case, we always choose a starting point better than the
origin. The points $x^k$ produced at each iteration thus satisfy
$f(x^k)\le f(x^0)< f(0)$, so that $x^k \in \mathcal L(x^0)\cap S$ and
$0\not\in \mathcal L(x^0)\cap S$, where
$$\mathcal L(x^0) = \{x\in \R^n \mid f(x)\le f(x^0)\}.$$
This is done in order to avoid obtaining the origin in any of the following iterations, as the objective function may
not be differentiable in $x = 0$.

For the following, we summarize some important properties of Problem~\eqref{ContRel0}.
\begin{lemma}\label{basiclemma}
 Assume that $x=0$ is not an optimal solution of Problem~\eqref{ContRel0} and
 a point $x^0\in S$ exists such that $f(x^0)< f(0)$. Then,
\begin{itemize}
 \item[(a)] the set $\mathcal L(x^0)\cap S$ is compact;
 \item[(b)] the function $f$ is continuously differentiable in~$\mathcal L(x^0)\cap S$;
 \item[(c)] the function $h$ is Lipschitz continuous in $S$;
 \item[(d)] the function $f$ is Lipschitz continuous in $S$ with Lipschitz constant~$L \sqrt{\lambda_{max}(Q)} + \|\mu\|$, where 
$L$ is the Lipschitz constant of the function $h$.
\end{itemize}
\end{lemma}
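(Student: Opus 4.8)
The plan is to prove the four items in order, since each builds on the previous ones. For part (a), I would first observe that $S$ itself is compact: it is a closed, bounded polytope (the standard simplex intersected with the nonnegative orthant, equivalently the corner simplex $\{x \ge 0 : \mathbf{1}^\top x \le 1\}$). Since $\mathcal L(x^0)$ is a sublevel set of a continuous function (continuity of $f$ on the interior of the feasible region follows from continuity of $h$ on $\R_+$ and of $y \mapsto \sqrt{y^\top Q y}$, which is continuous everywhere since $Q$ is positive definite), it is closed; the intersection of a closed set with a compact set is compact. Strictly speaking I only need that $\mathcal L(x^0) \cap S$ is a closed subset of the compact set $S$, so even if $f$ is only lower semicontinuous this works, but here $f$ is genuinely continuous on $S$.

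For part (b), the only point of $S$ at which $f$ could fail to be differentiable is $x = 0$, because $t \mapsto \sqrt{t}$ is not differentiable at $0$ and $x \mapsto x^\top Q x$ vanishes only at $x = 0$ (positive definiteness of $Q$). The hypothesis that $x = 0$ is not optimal together with the existence of $x^0$ with $f(x^0) < f(0)$ gives, as noted in the paragraph preceding the lemma, that $0 \notin \mathcal L(x^0) \cap S$. Hence on $\mathcal L(x^0) \cap S$ we have $x^\top Q x > 0$, so $\sqrt{x^\top Q x}$ is a composition of smooth maps and $h$ is differentiable on $(0,\infty)$ (indeed $h$ is assumed convex and differentiable on $\R_+$); a chain-rule argument gives continuous differentiability of $f$ there, with $\nabla f(x) = h'(\sqrt{x^\top Q x}) \frac{Qx}{\sqrt{x^\top Q x}} - \mu$.

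For part (c), I would bound the argument of $h$: for $x \in S$ we have $0 \le \sqrt{x^\top Q x} \le \sqrt{\lambda_{\max}(Q)}\,\|x\| \le \sqrt{\lambda_{\max}(Q)}$ since $\|x\| \le 1$ on $S$. Thus the argument ranges over the compact interval $[0, \sqrt{\lambda_{\max}(Q)}]$. A convex, differentiable function on $\R_+$ restricted to a compact interval is Lipschitz there (its derivative is monotone hence bounded on the interval); so $h$, viewed as a function of $x$ through $\sqrt{x^\top Q x}$, is Lipschitz on $S$ with some constant $L$. Here one must be slightly careful about what "$h$ is Lipschitz in $S$" means — I read it as: $x \mapsto h(\sqrt{x^\top Q x})$ is Lipschitz on $S$ — and this follows because $x \mapsto \sqrt{x^\top Q x}$ is itself Lipschitz (it is a seminorm, in fact a norm, with constant $\sqrt{\lambda_{\max}(Q)}$) and the composition of Lipschitz maps is Lipschitz.

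For part (d), combine (c) with the trivial observation that $x \mapsto \mu^\top x$ is Lipschitz with constant $\|\mu\|$ (Cauchy–Schwarz). Writing $f(x) = h(\sqrt{x^\top Q x}) - \mu^\top x$ and using the triangle inequality for Lipschitz constants, together with the composition bound $\mathrm{Lip}(h \circ \sqrt{\cdot^\top Q \cdot}) \le L \cdot \sqrt{\lambda_{\max}(Q)}$ from the reasoning in (c), yields the claimed constant $L\sqrt{\lambda_{\max}(Q)} + \|\mu\|$. The only mild subtlety — and the closest thing to an obstacle — is bookkeeping the precise Lipschitz constant of $h$: one should take $L$ to be a bound on $|h'|$ over $[0,\sqrt{\lambda_{\max}(Q)}]$, e.g. $L = h'(\sqrt{\lambda_{\max}(Q)})$ by monotonicity of $h'$, and then verify the composition constant is exactly $L\sqrt{\lambda_{\max}(Q)}$ rather than something larger; everything else is routine.
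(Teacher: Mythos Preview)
Your proposal is correct and follows essentially the same approach as the paper: parts (a)--(c) match almost verbatim, and for (d) you reach the identical constant $L\sqrt{\lambda_{\max}(Q)}+\|\mu\|$ via composition of Lipschitz maps, whereas the paper obtains it by directly bounding $\|\nabla f(x)\|$ for $x\neq 0$ using the same key fact $\|Q^{1/2}\|=\sqrt{\lambda_{\max}(Q)}$. Your reading of part~(c) is also the right one --- the paper intends $L$ to be the Lipschitz constant of $h$ as a one-variable function on the range $\{\sqrt{x^\top Q x}:x\in S\}$, exactly as you use it in (d).
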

\begin{proof}
  For~(a), it suffices to note that $\mathcal L(x^0)\cap S$ is a
  closed subset of the compact set $S$, while~(b) holds since
  $0\not\in \mathcal L(x^0)\cap S$. As $h$ is differentiable on the
  compact set $S$, we obtain~(c). Finally, to prove~(d), let~$Q^{1/2}$
  denote the unique symmetric matrix satisfying~$Q=Q^{1/2}Q^{1/2}$.
  Then
  \begin{equation}
    \begin{array}{l l}
      \|\nabla f(x)\| &= \Big\|h'(\|Q^{1/2} x\|) \frac{Qx}{\|Q^{1/2} x\|} - \mu\Big\|  \\
      \\
      &\le|h'(\|Q^{1/2} x\|)| \Big\| Q^{1/2} \frac{Q^{1/2}x}{\|Q^{1/2} x\|}\Big\| + \|\mu\| \\
      \\
      &\le|h'(\|Q^{1/2} x\|)| \| Q^{1/2}\| + \|\mu\| \\
      \\
      &\le L \sqrt{\lambda_{max}(Q)} + \|\mu\|.
    \end{array}
    \nonumber 
  \end{equation}
 
\end{proof}
In particular, it follows from~(d) that $f$ is uniformly continuous in $S$.

\subsection{Checking optimality in the origin}
\label{sec:zero_opt}

A first difficulty in dealing with Problem~\eqref{ContRel0} arises
from the fact that the objective function may not be differentiable in
the origin~$x=0$. We thus aim at checking, in a first phase of our
algorithm, whether the origin is an optimizer of
Problem~\eqref{ContRel0}. If so, we are done. Otherwise, our strategy
is to avoid the origin as an iterate of our algorithm, as discussed in
more detail in the following sections.

Since Problem~\eqref{ContRel0} is convex, the origin is a global
optimal solution if and only if there exists a
subgradient~$d\in \partial f(0)$ such that $d^\top x \ge 0$ for all
$x\in S$.  From standard results of convex analysis (see
e.g.~Theorem~2.3.9 in~\citet{clarke1990optimization}), we have that $\partial \|Q^\frac{1}{2} x\| = Q^{\frac 1 2 }$
and we derive that
$$
\partial f(0) = h'(0)\, Q^\frac{1}{2}B-\mu,
$$ 
where 
$B=\{w\in \R^n : \|w\|\le 1 \}$ is the unit ball in~$\R^n$.  Thus $x^\star
= 0$ is an optimal solution for Problem~\eqref{ContRel0} if and only
if
\begin{equation}\label{optCond0}
 \exists\; v\in B\colon\forall x\in S\colon \big(h'(0)\, Q^\frac{1}{2}v - \mu \big)^\top x \ge 0.
\end{equation}
Since $x\in S$ implies $x\ge 0$ and~$e_i\in S$ for all~$i=1,\dots, n$,
Condition~\eqref{optCond0} is equivalent to
\begin{equation}\label{optCond0bis}
 \exists\; v\in B\colon h'(0)\, Q^\frac{1}{2}v - \mu \ge 0.
\end{equation}
Note that Condition~\eqref{optCond0bis} is never satisfied
if~$h'(0)=0$, since $\mu\ge 0$ and $\mu \ne 0$. Consequently, the
origin is not an optimal solution of Problem~\eqref{ContRel0} in this
case. In general, Condition~\eqref{optCond0bis} allows to decide whether the origin is optimal 
by solving a convex quadratic optimization problem with non-negativity constraints,
namely
\[ 
  \begin{array}{l l}
\min & ||\tfrac 1{h'(0)}Q^{-\frac 12}(y+\mu)||\\[1ex]
\textnormal{~s.t. }& y\ge 0\;.
  \end{array}
\]

\subsection{Computation of a feasible descent direction}
\label{sec:compdir}

For the computation of a feasible descent direction we follow the
away-step approach described in~\citep{Guelat1982}.
At every iteration~$k$, we either choose a \emph{toward-step} or an
\emph{away-step}. We first solve the following
linearized problem (corresponding to the toward-step),
\begin{equation}\label{FWlin}
  \begin{array}{l l l}
    \hat x^k_{TS} = \arg & \min & \nabla f(x^k)^\top (x - x^k) \\
    &\textnormal{~s.t. }& x\in S,
  \end{array}
\end{equation}
and define $d^k_{TS}\in\R^n$ as $d^k_{TS} = \hat x^k_{TS} -
x^k$. 
The maximum stepsize that guarantees feasibility of the point chosen
along $d^k_{TS}$ is $\alpha_{TS} = 1$.
Once the toward-step direction is computed, we consider the problem corresponding to the away-step,
\begin{equation}\label{ASlin}
  \begin{array}{l l l}
    \hat x^k_{AS}  = \arg &\max & \nabla f(x^k)^\top (x - x^k) \\
    &\textnormal{~s.t. }& x\in S,\\
    & & x_i = 0 \; \mbox{ if } x^k_i = 0,
  \end{array}
\end{equation}
and define $d^k_{AS}\in\R^n$ as $d^k_{AS} = x^k - \hat x^k_{AS}$. 
In this case, the maximum stepsize guaranteeing feasibility is 
$$\alpha_{AS} = \max\{ \alpha\ge 0 \mid x^k + \alpha d^k_{AS} \in S \}.$$
If $\hat{x}_{AS}^k = e_{\hat \imath}$, the point $x^k + \alpha d^k_{AS}$ may become infeasible in case
the non-negativity constraint on $x_{\hat\imath}$ is violated.
On the other hand, if $\hat{x}_{AS}^k = 0$, the point $x^k + \alpha d^k_{AS}$ can only violate the constraint ${\bf 1}^\top x \le 1$.
Therefore, $\alpha_{AS}$ needs to be chosen as:
$$\alpha_{AS}:=\begin{cases}
  \begin{array}{ll}
    \frac{x_{\hat\imath}^k}{1 - x_{\hat\imath}^k}  & \textnormal{if }\hat x^k_{AS} = e_{\hat\imath},\\[1.3ex]
    \frac{1 - \mathbf{1}^\top x^k}{\mathbf{1}^\top x^k} & \textnormal{if }\hat x^k_{AS} = 0.
  \end{array}
\end{cases}$$
Note that, according to this rule, $\alpha_{AS} = 1$ may be an infeasible steplength.
Note also that, in case the equality constraints are not enforced in Problem~\eqref{ASlin}, 
$\alpha_{AS}$ could be trivially zero.

In order
to choose between the two directions, we use a criterion similar to the one presented
in~\citep{Guelat1982}: if
\begin{equation}\label{dirchoice}
  \nabla f(x^k)^\top d^k_{AS} \le \nabla f(x^k)^\top d^k_{TS} \quad\mbox{ and}\quad \alpha_{AS}>\beta, 
\end{equation}
with $0<\beta\ll1$ a suitably chosen constant value,
we choose the away-step direction, setting $\hat x^k = \hat
x^k_{AS}$ and $d^k =
x^k - \hat x^k = d^k_{AS}$. Otherwise we select
the toward-step direction, setting $\hat x^k = \hat x^k_{TS}$ and $d^k = \hat x^k - x^k = d^k_{TS}$.
The condition $\alpha_{AS}>\beta$ is needed to ensure convergence, as will become clear 
in Section~\ref{sec:conv_an} below. 

In both Problems~\eqref{FWlin} and~\eqref{ASlin}, we need to optimize
a linear function over a simplex. This reduces to computing the
objective function value at each vertex of the simplex, i.e.,
for~$0$ and~$e_1,\dots,e_n$ in~\eqref{FWlin} and for~$0$ and all~$e_i$
with~$x^k_i > 0$ in~\eqref{ASlin}. Consequently, after computing the
gradient~$\nabla f(x^k)$, both solutions can be obtained at a
computational cost of~$\mathcal{O}(n)$.

\subsection{Computation of a suitable stepsize}
\label{sec:nmarm}

When using exact line searches, the Frank-Wolfe method with away-steps
converges linearly if the objective function satisfies specific
assumptions; see e.g. \citep{Guelat1982, lacoste2013affine}. When an
exact line search approach is too expensive, we combine the away-step
approach with non-monotone inexact line searches. Even if the
Frank-Wolfe method is not guaranteed to converge linearly in the
latter case, it yields very good results in practice, as will be shown
in the numerical experience section.

In the non-monotone line search used in our algorithm, a stepsize is
accepted as soon as it yields a point which allows a sufficient
decrease with respect to a given reference value. A classical choice
for the reference value is the maximum among the last~$p_{nm}$ objective
function values computed, where~$p_{nm}$ is a positive integer constant.  See
Algorithm~\ref{NM-Arm} for the details of our line search method.
\begin{algorithm}
  \caption{Non-monotone Armijo line search}
  \label{NM-Arm}
  \begin{algorithmic}
    \par\vspace*{0.1cm}
  \item$0$\hspace*{0.5truecm} Choose $\delta \in (0,1)$, $\gamma_1\in (0,\frac 1 2)$, $\gamma_2\ge 0 $, $p_{nm}>0$. 
  \item$1$\hspace*{0.5truecm} Update $$\bar f^k = \max_{0\le i\le \min\{p_{nm},k\}} f(x^{k-i})$$
  \item$2$\hspace*{0.5truecm} Choose initial stepsize $\alpha\in(0,\alpha_{max}]$
  \item$3$\hspace*{0.5truecm} {\bf While} $f( x^k+ \alpha d^k )> \bar f^k+\gamma_1\, \alpha\, \nabla f(x^k)^{\top} d^k - \gamma_2\,\alpha^2\,\|d^k\|^2$
  \item$4$\hspace*{2.0truecm} Set $\alpha = \delta \alpha$
  \item$5$\hspace*{0.5truecm} {\bf End While}
    \par\vspace*{0.1cm}
  \end{algorithmic}
\end{algorithm}

The maximum stepsize $\alpha_{max}$ used in Line~$2$ of Algorithm~\ref{NM-Arm} is set
to $\alpha_{TS}$ if the toward-step direction is chosen; it is set to $\alpha_{AS}$, otherwise.

The following result states that Algorithm~\ref{NM-Arm} terminates in
a finite number of steps. It can be proved using similar arguments as
in the proof of Proposition~3 in~\citep{GrippoSciandrone2002}.
\begin{proposition}\label{prop1}
  For each $k$, assume that $\nabla f(x^k)^\top d^k < 0$.  Then
  Algorithm~\ref{NM-Arm} determines, in a finite number of iterations
  of the while loop in Lines 3--5, a stepsize $\alpha^k$ such that
  \begin{equation*}
    f( x^k+ \alpha^k d^k )\le \bar f^k+\gamma_1\, \alpha^k\, 
    \nabla f(x^k)^{\top} d^k - \gamma_2\,(\alpha^k)^2\,
    \|d^k\|^2.
  \end{equation*}
\end{proposition}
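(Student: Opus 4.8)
The plan is to show that the while loop in Lines 3--5 of Algorithm~\ref{NM-Arm} cannot run forever, arguing by contradiction. Suppose the loop never terminates. Then for every $j\ge 0$ the trial stepsize $\alpha=\delta^j\alpha_{max}$ fails the acceptance test, i.e.
\begin{equation*}
  f(x^k+\delta^j\alpha_{max}\,d^k) > \bar f^k + \gamma_1\,\delta^j\alpha_{max}\,\nabla f(x^k)^\top d^k - \gamma_2\,\delta^{2j}\alpha_{max}^2\,\|d^k\|^2.
\end{equation*}
First I would observe that, since $\bar f^k\ge f(x^k)$ by its definition in Line~1, we may replace $\bar f^k$ by $f(x^k)$ on the right-hand side and only weaken the inequality; thus it suffices to derive a contradiction from
\begin{equation*}
  \frac{f(x^k+\delta^j\alpha_{max}\,d^k) - f(x^k)}{\delta^j\alpha_{max}} > \gamma_1\,\nabla f(x^k)^\top d^k - \gamma_2\,\delta^{j}\alpha_{max}\,\|d^k\|^2
\end{equation*}
holding for all $j$, after dividing through by $\delta^j\alpha_{max}>0$.

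Next I would let $j\to\infty$. On the left-hand side, $\delta^j\alpha_{max}\to 0$, and by part~(b) of Lemma~\ref{basiclemma} the function $f$ is continuously differentiable along the segment joining $x^k$ to $x^k+\alpha_{max}d^k$ (this segment lies in $\mathcal L(x^0)\cap S$ by feasibility of the stepsize and by the fact, established in the text preceding Lemma~\ref{basiclemma}, that the iterates stay in $\mathcal L(x^0)\cap S$ and away from the origin), so the difference quotient converges to the directional derivative $\nabla f(x^k)^\top d^k$. On the right-hand side, the term $\gamma_2\,\delta^j\alpha_{max}\,\|d^k\|^2\to 0$. Taking the limit therefore yields $\nabla f(x^k)^\top d^k \ge \gamma_1\,\nabla f(x^k)^\top d^k$, i.e. $(1-\gamma_1)\,\nabla f(x^k)^\top d^k\ge 0$. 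Since $\gamma_1\in(0,\tfrac12)$ we have $1-\gamma_1>0$, so this forces $\nabla f(x^k)^\top d^k\ge 0$, contradicting the standing assumption $\nabla f(x^k)^\top d^k<0$. Hence the loop terminates after finitely many halvings, and the accepted stepsize $\alpha^k=\delta^{j}\alpha_{max}$ for the first $j$ that passes the test satisfies exactly the claimed inequality, since the test is its negation.

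I expect the only real subtlety to be justifying the passage to the limit in the difference quotient, which requires knowing that $f$ is $C^1$ on a neighbourhood (within $S$) of the whole trial segment and that the origin is never hit. Both facts are already in hand: feasibility of $\alpha_{max}$ keeps the segment inside $S$, the monotone-type guarantee $f(x^k)\le f(x^0)<f(0)$ keeps it inside $\mathcal L(x^0)$, and Lemma~\ref{basiclemma}(b) then gives differentiability there. Everything else is the standard Armijo-type argument adapted to the non-monotone reference value $\bar f^k$ and to the extra safeguard term $-\gamma_2\alpha^2\|d^k\|^2$, exactly along the lines of Proposition~3 in~\citep{GrippoSciandrone2002}; the $\gamma_2$ term is harmless because it vanishes in the limit, and the replacement of $\bar f^k$ by $f(x^k)$ handles the non-monotonicity.
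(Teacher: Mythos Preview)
Your argument is correct and is exactly the standard Armijo-type contradiction that the paper intends: the paper gives no proof of its own here, merely referring to Proposition~3 of~\citep{GrippoSciandrone2002}, whose reasoning you have reproduced. One small overstatement worth fixing: the trial segment $\{x^k+\alpha d^k:0\le\alpha\le\alpha_{max}\}$ need not lie in $\mathcal L(x^0)$---only the accepted iterates do, and indeed in a non-monotone scheme the trial points may well leave the level set---but this is harmless, since the limit of the difference quotient requires only that $f$ be differentiable at $x^k$ itself, which holds because $x^k\neq 0$.
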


\noindent
From a practical point of view, it is important that the computation
of the objective function values of the trial points~$x^k+ \alpha d^k$
can be accelerated by using incremental updates. Therefore, during the
entire algorithm for solving Problem~\eqref{ContRel0}, we keep the
values~$Qx^k\in\R^n$, $(x^k)^\top Qx^k\in\R$, and $\mu^\top x^k\in\R$
up-to-date. In the line search, if a toward-step is applied and~$\hat x^k=e_{\hat\imath}$, we
exploit the fact that all expressions
\begin{eqnarray*}
  (x^k+ \alpha d^k)^\top  Q (x^k+ \alpha d^k) & = & (1-\alpha)^2\,(x^k)^\top  Q x^k  +  2 \alpha (1-\alpha)\, 
(Qx^k)_{\hat{\imath}} + \alpha^2  Q_{\hat{\imath} \hat{\imath}}\\
  \mu^\top (x^k+ \alpha d^k) & = &(1 - \alpha)\, \mu^\top x^k + \alpha\,  \mu_{\hat{\imath}}
\end{eqnarray*}
can be computed in constant time. Similarly, for~$\hat x^k=0$, we
obtain
\begin{eqnarray*}
  (x^k+ \alpha d^k)^\top  Q (x^k+ \alpha d^k) & = & (1-\alpha)^2\,(x^k)^\top  Q x^k\\
  \mu^\top (x^k+ \alpha d^k) & = &(1 - \alpha)\, \mu^\top x^k.
\end{eqnarray*}
In particular, if~$h$ can be evaluated in constant time, the same is
true for the computation of the objective value~$f(x^k+ \alpha
d^k)$. Moreover, when the line search is successful and the next
iterate is chosen, the same formula as above can be used to
compute~$(x^{k+1})^\top Qx^{k+1}\in\R$ and $\mu^\top x^{k+1}\in\R$ in
constant time, while~$Qx^{k+1}\in\R^n$ can be updated in linear time
using
\begin{equation*}
  Q(x^k+ \alpha d^k) = \begin{cases}
    \begin{array}{ll}
      (1 - \alpha)\,  Q x^k + \alpha\, Q_{\hat{\imath}\cdot} & \textnormal{ if }\hat x^k=e_{\hat\imath}\\
      (1 - \alpha)\,  Q x^k & \textnormal{ if }\hat x^k=0.
    \end{array}
  \end{cases}
\end{equation*}
The case of an away-step can be handled analogously.

In summary, after
computing $Qx^0\in\R^n$, $(x^0)^\top Qx^0\in\R$, and $\mu^\top
x^0\in\R$ from scratch, the computation of objective function values
takes~$\mathcal{O}(1)$ time per iteration of Algorithm~\ref{NM-Arm} --
assuming that~$h$ can be evaluated in constant time --
plus~$\mathcal{O}(n)$ time per iteration of Algorithm~\ref{NM-MFW} 
(needed to keep the values of~$Qx^k\in\R^n$, $(x^k)^\top Qx^k\in\R$, and $\mu^\top x^k\in\R$
up-to-date).

\subsection{Convergence analysis of the non-monotone Frank-Wolfe algorithm}\label{sec:conv_an}

We now analyze the convergence properties of the
non-monotone Frank-Wolfe algorithm~\texttt{NM-MFW} with away-steps
(Algorithm~\ref{NM-MFW}). 
All the proofs of the following theoretical results can be found in the Appendix.
\begin{lemma}\label{lemma1}
  Suppose that \texttt{NM-MFW} produces an infinite sequence $\{x^k\}_{k\in\N}$. Then
  \begin{itemize}
  \item[(i)] $x^k \in \mathcal L(x^0)\cap S$ for all $k$;
  \item[(ii)] the sequence $\{ \bar f^k\}_{k\in\N}$ is non-increasing and converges to a value $\bar f$.
 \end{itemize}
\end{lemma}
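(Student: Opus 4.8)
The plan is to prove the two assertions of Lemma~\ref{lemma1} in order, relying on Proposition~\ref{prop1} and the non-monotone Armijo condition of Algorithm~\ref{NM-Arm}. For part~(i), I would argue by induction on $k$. The base case $x^0 \in \mathcal L(x^0)\cap S$ is immediate. For the inductive step, suppose $x^0,\dots,x^k$ all lie in $S$; feasibility of $x^{k+1}=x^k+\alpha^k d^k$ follows from the construction of the descent direction in Section~\ref{sec:compdir} together with the fact that the stepsize is chosen in $(0,\alpha_{max}]$ with $\alpha_{max}\in\{\alpha_{TS},\alpha_{AS}\}$, i.e.\ never exceeding the maximal feasible steplength along $d^k$; since $S$ is convex and $x^k, \hat x^k\in S$, any $x^k + \alpha d^k$ with $\alpha$ in the admissible range stays in $S$. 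To see $x^{k+1}\in\mathcal L(x^0)$, I would invoke the line search acceptance criterion: $f(x^{k+1})\le \bar f^k + \gamma_1\alpha^k\nabla f(x^k)^\top d^k - \gamma_2(\alpha^k)^2\|d^k\|^2 \le \bar f^k$, because $\gamma_1,\gamma_2\ge 0$, $\alpha^k>0$, and $\nabla f(x^k)^\top d^k<0$ (the algorithm would have stopped otherwise, per Line~4 of Algorithm~\ref{NM-MFW}). Then $\bar f^k = \max_{0\le i\le \min\{p_{nm},k\}} f(x^{k-i}) \le f(x^0)$ by the inductive hypothesis, so $f(x^{k+1})\le f(x^0)$, giving $x^{k+1}\in\mathcal L(x^0)\cap S$.

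For part~(ii), I would first record that $\bar f^k$ is well-defined and finite for every $k$ since $f$ is continuous on the compact set $\mathcal L(x^0)\cap S$ (Lemma~\ref{basiclemma}(a),(b)), and all iterates lie there by part~(i); hence $\{\bar f^k\}$ is bounded below by $\min_{\mathcal L(x^0)\cap S} f$. The monotonicity is the standard non-monotone bookkeeping argument: writing $m(k)=\min\{p_{nm},k\}$, I want to show $\bar f^{k+1}\le \bar f^k$. We have $\bar f^{k+1} = \max_{0\le i\le m(k+1)} f(x^{k+1-i}) = \max\{\, f(x^{k+1}),\ \max_{1\le i\le m(k+1)} f(x^{k+1-i})\,\}$. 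The second term is a maximum over a subset of $\{f(x^{k}),f(x^{k-1}),\dots,f(x^{k-m(k)})\}$ (using $m(k+1)\le m(k)+1$), hence is $\le \bar f^k$. The first term satisfies $f(x^{k+1})\le \bar f^k + \gamma_1\alpha^k\nabla f(x^k)^\top d^k - \gamma_2(\alpha^k)^2\|d^k\|^2 \le \bar f^k$ exactly as in part~(i). Therefore $\bar f^{k+1}\le\bar f^k$, so $\{\bar f^k\}$ is non-increasing and bounded below, hence convergent to some $\bar f\in\R$; this is the claimed limit.

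I do not expect serious obstacles here; the statement is essentially a formalization of the defining properties of the non-monotone scheme. The one point that requires a little care is the index arithmetic in showing $\{f(x^{k+1-i}) : 1\le i\le m(k+1)\}\subseteq\{f(x^{k-i}):0\le i\le m(k)\}$, i.e.\ that shrinking or growing the window by one and shifting by one still leaves the new ``history set'' (minus the newest point) inside the old one. Explicitly, for $1\le i\le m(k+1)$ one has $0\le i-1\le m(k+1)-1\le m(k)$, and $x^{k+1-i}=x^{k-(i-1)}$, which settles it. The only genuine input from earlier results is the compactness/differentiability of $\mathcal L(x^0)\cap S$ from Lemma~\ref{basiclemma} (so that $f(x^k)$ and hence $\bar f^k$ are finite and bounded below) and Proposition~\ref{prop1} (so that an accepted stepsize $\alpha^k$ with the stated inequality exists at every iteration); everything else is elementary.
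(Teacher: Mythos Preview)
Your proposal is correct and follows essentially the same approach as the paper: both arguments hinge on the line search acceptance inequality $f(x^{k+1})\le \bar f^k$ (which, via an implicit or explicit induction, forces $\bar f^k\le f(x^0)$ and hence~(i)), and on the observation $\bar f^{k+1}\le\max\{\bar f^k,f(x^{k+1})\}=\bar f^k$ for~(ii), with convergence following from boundedness below on the compact set $\mathcal L(x^0)\cap S$. Your version is simply more explicit about the induction, the feasibility $x^{k+1}\in S$, and the index arithmetic for the sliding window; the paper compresses all of this into two sentences.
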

\begin{proof}
For the proof, see Appendix.
 
 \end{proof}

\begin{lemma}\label{lemma2}
 Suppose that \texttt{NM-MFW} produces an infinite sequence $\{x^k\}$. Then
\begin{equation*}
\lim_{k\rightarrow \infty} f(x^k) = \lim_{k\rightarrow \infty} \bar f^k = \bar f.
\end{equation*}
\end{lemma}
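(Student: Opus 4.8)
\textbf{Proof plan for Lemma~\ref{lemma2}.}

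The plan is to combine the two conclusions of Lemma~\ref{lemma1} with the Armijo-type decrease guaranteed by Proposition~\ref{prop1}, in the classical non-monotone line-search style (cf.\ \citep{GrippoSciandrone2002}). First I would set up notation: for each $k$ let $\ell(k)\in\{k-\min\{p_{nm},k\},\dots,k\}$ be an index attaining the maximum in the definition of $\bar f^k$, so that $\bar f^k=f(x^{\ell(k)})$. Since $\{\bar f^k\}$ converges to $\bar f$ by Lemma~\ref{lemma1}(ii), it suffices to show $f(x^k)\to\bar f$ as well; because $f(x^k)\le\bar f^k$ for every $k$ (each $f(x^k)$ is one of the terms in the max defining $\bar f^k$), I only need a matching lower bound in the limit.

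The key step is to show $\alpha^k\|d^k\|\to 0$, or more precisely that the ``decrease term'' vanishes. From Proposition~\ref{prop1}, at every iteration
\begin{equation*}
 f(x^{k+1})\le \bar f^k+\gamma_1\,\alpha^k\,\nabla f(x^k)^\top d^k-\gamma_2\,(\alpha^k)^2\|d^k\|^2\le \bar f^k-\gamma_2\,(\alpha^k)^2\|d^k\|^2,
\end{equation*}
using $\nabla f(x^k)^\top d^k<0$ and $\gamma_1,\alpha^k>0$. Hence $f(x^{k+1})\le\bar f^k$, which also gives $\bar f^{k+1}\le\bar f^k$ (consistent with Lemma~\ref{lemma1}(ii)). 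Iterating the relation $\bar f^{k+1}=\max\{f(x^{k+1}),\dots\}$ down the window of length $p_{nm}$, one obtains that $\bar f^{k}$ is, up to finitely many indices, bounded above by $f$ evaluated at an earlier iterate minus accumulated decrease terms; telescoping along the subsequence $\{\ell(k)\}$ and using that $\{\bar f^k\}$ is bounded below (since $\mathcal L(x^0)\cap S$ is compact and $f$ continuous there, Lemma~\ref{basiclemma}(a)--(b)) forces $\sum_k (\alpha^k)^2\|d^k\|^2<\infty$, in particular $(\alpha^k)^2\|d^k\|^2\to 0$. The standard device here is an induction on $j$ showing $\lim_{k\to\infty}\alpha^{\ell(k)-j}\|d^{\ell(k)-j}\|=0$ and $\lim_{k\to\infty}f(x^{\ell(k)-j})=\bar f$ for each fixed $j\ge 1$, by writing
\begin{equation*}
 \bar f^{\ell(k)}=f(x^{\ell(k)})\le \bar f^{\ell(k)-1}-\gamma_2\,(\alpha^{\ell(k)-1})^2\|d^{\ell(k)-1}\|^2
\end{equation*}
and taking limits, using the already-established convergence of $\{\bar f^k\}$ to squeeze the middle term.

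Once $\lim_k f(x^{\ell(k)-j})=\bar f$ is known for every fixed $j$, I would close the argument by relating an arbitrary index $k$ to the nearest preceding ``max-attaining'' index. Every $k$ satisfies $k=\ell(k+p_{nm}+1)-j$ for some $0\le j\le p_{nm}+1$ — or more simply, $\ell(k)-k$ is bounded by $p_{nm}$ in absolute offset — so $f(x^k)$ appears as $f(x^{\ell(m)-j})$ for a bounded shift $j$ and $m\to\infty$ as $k\to\infty$. Therefore $\lim_k f(x^k)=\bar f$, and since $\bar f^k\to\bar f$ by Lemma~\ref{lemma1}(ii), both limits coincide. The main obstacle is the bookkeeping in the finite-window induction: one must be careful that the offsets $\ell(k)-k$ stay in the fixed range $\{0,1,\dots,p_{nm}\}$ and that the telescoping is valid only after discarding an initial segment $k<p_{nm}$; the compactness of $\mathcal L(x^0)\cap S$ and continuity of $f$ there (Lemma~\ref{basiclemma}) are what guarantee the sequence $\{f(x^k)\}$ is bounded below so that the summed decrease is finite.
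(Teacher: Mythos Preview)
Your plan follows essentially the same route as the paper: define the max-attaining index $\ell(k)$ (the paper calls it $t^k$), run an induction on the offset $j$ to show $f(x^{\ell(k)-j})\to\bar f$ for each fixed $j$, and close by writing an arbitrary $k$ as $\ell(k+p_{nm}+1)-j$ with $j$ bounded. Two small points are worth tightening.

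First, the claim that telescoping along $\{\ell(k)\}$ yields $\sum_k(\alpha^k)^2\|d^k\|^2<\infty$ over \emph{all} $k$ does not follow from the displayed inequality; the telescoping only controls the terms at indices of the form $\ell(k)-1$, and the indices $\ell(k)$ need not exhaust $\N$ (they can repeat). This side claim is unnecessary anyway: the induction on $j$ that you describe next is the actual mechanism, and the paper uses nothing stronger.

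Second, in the inductive step you write down how to get $(\alpha^{\ell(k)-1})^2\|d^{\ell(k)-1}\|^2\to 0$ by squeezing between $\bar f^{\ell(k)}$ and $\bar f^{\ell(k)-1}$, but you do not say how this yields $f(x^{\ell(k)-1})\to\bar f$, which is what the induction must produce. The missing link is that $\|x^{\ell(k)}-x^{\ell(k)-1}\|=\alpha^{\ell(k)-1}\|d^{\ell(k)-1}\|\to 0$, and then \emph{uniform} (or Lipschitz, Lemma~\ref{basiclemma}(d)) continuity of $f$ on $\mathcal L(x^0)\cap S$ gives $f(x^{\ell(k)-1})-f(x^{\ell(k)})\to 0$. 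You invoke compactness and continuity only to bound $f$ from below; the paper uses them here for uniform continuity, and without this step the induction does not propagate.
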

\begin{proof}
For the proof, see Appendix.
 
 \end{proof}

\begin{lemma}\label{prop:limgraddir}
  Suppose that \texttt{NM-MFW} produces an infinite sequence
  $\{x^k\}_{k\in\N}$. Then
  \begin{equation*}
    \lim_{k\rightarrow \infty} \nabla f(x^k)^\top d^k = 0.
  \end{equation*}
\end{lemma}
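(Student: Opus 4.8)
The plan is to combine the sufficient-decrease property of the non-monotone Armijo line search (Proposition~\ref{prop1}) with the convergence of $\{f(x^k)\}$ established in Lemma~\ref{lemma2}, and then handle the away-step safeguard separately. The key inequality is the one produced by the line search at every successful step:
\[
f(x^{k+1}) \le \bar f^k + \gamma_1\,\alpha^k\,\nabla f(x^k)^\top d^k - \gamma_2\,(\alpha^k)^2\,\|d^k\|^2 .
\]
Since $\nabla f(x^k)^\top d^k \le 0$ and $\gamma_2\ge 0$, this gives $f(x^{k+1}) \le \bar f^k$, and more precisely
\[
-\gamma_1\,\alpha^k\,\nabla f(x^k)^\top d^k \le \bar f^k - f(x^{k+1}) .
\]
Now $\bar f^k \to \bar f$ by Lemma~\ref{lemma1}(ii) and $f(x^{k+1}) \to \bar f$ by Lemma~\ref{lemma2}, so the right-hand side tends to $0$; hence $\alpha^k\,\nabla f(x^k)^\top d^k \to 0$. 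The remaining task is to promote this to $\nabla f(x^k)^\top d^k \to 0$, i.e.\ to rule out the situation where $\nabla f(x^k)^\top d^k$ stays bounded away from $0$ while $\alpha^k \to 0$.

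First I would treat the two types of steps. If a toward-step is taken, then $\alpha_{max}=\alpha_{TS}=1$, so either the full step $\alpha^k=1$ is accepted — in which case $-\gamma_1\nabla f(x^k)^\top d^k \le \bar f^k - f(x^{k+1}) \to 0$ directly — or the line search backtracked, meaning the previous trial stepsize $\alpha^k/\delta$ failed the Armijo test:
\[
f(x^k + (\alpha^k/\delta) d^k) > \bar f^k + \gamma_1\,(\alpha^k/\delta)\,\nabla f(x^k)^\top d^k - \gamma_2\,(\alpha^k/\delta)^2\,\|d^k\|^2 .
\]
Using $\bar f^k \ge f(x^k)$, the mean value theorem on the left, and the Lipschitz continuity of $\nabla f$ on $\mathcal L(x^0)\cap S$ (Lemma~\ref{basiclemma}(b), (d) and compactness from (a)), a standard manipulation yields a lower bound of the form $\alpha^k/\delta \ge c\,\bigl(-\nabla f(x^k)^\top d^k\bigr)/\|d^k\|^2$ for a positive constant $c$; since $\|d^k\|$ is bounded (again by compactness of $S$), this gives $\alpha^k \ge c'\,\bigl(-\nabla f(x^k)^\top d^k\bigr)$. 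Combining with $\alpha^k\,\nabla f(x^k)^\top d^k \to 0$ forces $\nabla f(x^k)^\top d^k \to 0$ along the toward-step subsequence. For an away-step, the subtlety is that $\alpha_{max}=\alpha_{AS}$ need not be $1$; but the choice rule~\eqref{dirchoice} only selects an away-step when $\alpha_{AS} > \beta$, so $\alpha_{max}$ is bounded below by the fixed constant $\beta>0$. Hence the same backtracking argument applies: either $\alpha^k = \alpha_{max} \ge \beta$ is accepted and the decrease bound directly drives $\nabla f(x^k)^\top d^k \to 0$, or a backtracking step occurred and the mean-value/Lipschitz estimate gives $\alpha^k \ge c'\bigl(-\nabla f(x^k)^\top d^k\bigr)$ as before.

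Putting the pieces together: on every infinite subsequence — whether dominated by toward-steps or by away-steps — we obtain $\nabla f(x^k)^\top d^k \to 0$, so the full sequence converges to $0$. The main obstacle is the away-step case, precisely because $\alpha_{AS}$ can in principle be small; this is exactly why the guard $\alpha_{AS}>\beta$ was built into~\eqref{dirchoice}, and the proof must invoke it to keep $\alpha_{max}$ bounded away from zero. A secondary technical point is being careful that all the constants (Lipschitz constant of $\nabla f$, bound on $\|d^k\|$, bound on $\|x^k\|$) are uniform, which is guaranteed by restricting attention to the compact set $\mathcal L(x^0)\cap S$ from Lemma~\ref{basiclemma}(a) together with the fact that $0\notin \mathcal L(x^0)\cap S$ so $f$ is $C^1$ there.
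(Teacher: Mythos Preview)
Your overall architecture is right and matches the paper's: from the Armijo inequality and Lemma~\ref{lemma2} you correctly extract $\alpha^k\,\nabla f(x^k)^\top d^k\to 0$, and you correctly identify that the safeguard $\alpha_{AS}>\beta$ in~\eqref{dirchoice} is what keeps $\alpha_{\max}$ bounded away from zero. The gap is in how you promote $\alpha^k\,\nabla f(x^k)^\top d^k\to 0$ to $\nabla f(x^k)^\top d^k\to 0$.

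Your ``standard manipulation'' that yields $\alpha^k/\delta \ge c\,(-\nabla f(x^k)^\top d^k)/\|d^k\|^2$ needs a descent-lemma bound of the form $f(y)\le f(x)+\nabla f(x)^\top(y-x)+\tfrac{L}{2}\|y-x\|^2$, i.e.\ Lipschitz continuity of $\nabla f$. You cite Lemma~\ref{basiclemma}(b),(d) for this, but (b) only gives that $\nabla f$ is \emph{continuous} on $\mathcal L(x^0)\cap S$, and (d) only gives that $f$ itself is Lipschitz. Continuous differentiability on a compact set does \emph{not} imply a Lipschitz gradient, and under the paper's standing hypotheses on $h$ (merely convex, differentiable, non-decreasing) $\nabla f$ need not be Lipschitz: take any $h$ whose derivative $h'$ is continuous and monotone but has unbounded slope somewhere in the relevant range. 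The paper in fact stresses, right after Theorem~\ref{Theo:convergence}, that no Lipschitz assumption on $\nabla f$ is used. A secondary issue is that even if such an $L$ existed on $\mathcal L(x^0)\cap S$, the failed trial point $x^k+(\alpha^k/\delta)d^k$ is in $S$ but typically \emph{not} in $\mathcal L(x^0)$, so you would need the bound on a larger set.

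The fix is to replace your quantitative lower bound on $\alpha^k$ by a qualitative contradiction argument that uses only continuity of $\nabla f$. Assume a subsequence with $\nabla f(x^{k})^\top d^{k}\to -\eta<0$; pass to a further subsequence with $x^k\to\bar x$ and $d^k\to\bar d$. Then $\alpha^k\to 0$ forces (for $k$ large, since $\alpha_{\max}\ge\beta$) a failed Armijo test at $\alpha^k/\delta$; apply the mean value theorem to the left-hand side, divide by $\alpha^k/\delta$, and let $k\to\infty$ using only continuity of $\nabla f$ to obtain $-\eta\ge -\gamma_1\eta$, a contradiction. This is exactly the paper's route, and it is the step where your proposal diverges from what the available hypotheses support.
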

\begin{proof} 
For the proof, see Appendix.
 
\end{proof}

\begin{theorem}\label{Theo:convergence}
  Let $\{x^k\}\subseteq \mathcal L(x^0)\cap S$ be the sequence of
  points produced by \texttt{NM-MFW}. Then, either an integer $k\ge 0$
  exists such that $x ^k$ is an optimal solution for
  Problem~\eqref{ContRel0}, or the sequence~$\{x^k\}_{k\in\N}$ is
  infinite and every limit point $\xstar$ is an optimal solution for
  Problem~\eqref{ContRel0}.
\end{theorem}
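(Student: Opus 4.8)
The plan is to follow the dichotomy in the statement. In the \emph{finite} case, suppose \texttt{NM-MFW} stops at iteration~$k$, i.e.\ $\nabla f(x^k)^\top d^k = 0$. I would first record the two facts that make this enough: (i) $\nabla f(x^k)^\top d^k_{TS} = \min_{x\in S}\nabla f(x^k)^\top(x-x^k)\le 0$ always holds, because $x^k\in S$; and (ii) by the selection rule~\eqref{dirchoice} one always has $\nabla f(x^k)^\top d^k \le \nabla f(x^k)^\top d^k_{TS}$ (with equality if the toward-step is taken, and the displayed inequality in~\eqref{dirchoice} if the away-step is taken). Hence $0 = \nabla f(x^k)^\top d^k \le \nabla f(x^k)^\top d^k_{TS}\le 0$, so $\nabla f(x^k)^\top(x-x^k)\ge 0$ for all $x\in S$. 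Since $f$ is convex and $S$ is convex, this first-order condition is sufficient for global optimality, so $x^k$ solves~\eqref{ContRel0}.

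In the \emph{infinite} case, let $x^\star$ be a limit point of $\{x^k\}$ and fix a subsequence $x^{k_j}\to x^\star$ (one exists since $\{x^k\}\subseteq \mathcal L(x^0)\cap S$, which is compact by Lemma~\ref{basiclemma}(a)). As that set is closed, $x^\star\in\mathcal L(x^0)\cap S$, and $f$ is continuously differentiable there by Lemma~\ref{basiclemma}(b), so $\nabla f$ is continuous at $x^\star$. The key step is to transfer the conclusion of Lemma~\ref{prop:limgraddir}, namely $\nabla f(x^{k})^\top d^{k}\to 0$, to the toward-step directions: exactly as in the finite case, $\nabla f(x^{k_j})^\top d^{k_j}\le \nabla f(x^{k_j})^\top d^{k_j}_{TS}\le 0$ for every $j$, and squeezing gives $\nabla f(x^{k_j})^\top d^{k_j}_{TS}\to 0$.

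Finally, for an arbitrary fixed $x\in S$, the minimality defining $\hat x^{k_j}_{TS}$ in~\eqref{FWlin} gives $\nabla f(x^{k_j})^\top d^{k_j}_{TS}\le \nabla f(x^{k_j})^\top(x-x^{k_j})$. Letting $j\to\infty$, using $x^{k_j}\to x^\star$ and continuity of $\nabla f$ at $x^\star$, the left-hand side tends to $0$ and the right-hand side to $\nabla f(x^\star)^\top(x-x^\star)$, whence $\nabla f(x^\star)^\top(x-x^\star)\ge 0$ for every $x\in S$. Convexity of $f$ and $S$ then makes $x^\star$ a global optimum of~\eqref{ContRel0}, which is the claim.

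I expect the only delicate point to be the reduction from the \emph{selected} direction $d^k$ to the toward-step direction $d^k_{TS}$, i.e.\ justifying $\nabla f(x^k)^\top d^k\le \nabla f(x^k)^\top d^k_{TS}$ uniformly along the sequence; this rests entirely on the first inequality in~\eqref{dirchoice}, while the safeguard $\alpha_{AS}>\beta$ is not used here (it is what makes the earlier lemmas, in particular Lemma~\ref{prop:limgraddir}, go through). Everything else is the standard Frank--Wolfe optimality-gap argument combined with continuity of the gradient on the compact level set.
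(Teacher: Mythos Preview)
Your proof is correct and follows essentially the same route as the paper's: both rest on the inequality $\nabla f(x^k)^\top d^k \le \nabla f(x^k)^\top d^k_{TS} \le \nabla f(x^k)^\top(x-x^k)$ for all $x\in S$, combined with Lemma~\ref{prop:limgraddir} and continuity of $\nabla f$ on the compact level set. The only cosmetic differences are that you handle the finite-termination case explicitly (the paper merely asserts it), and you pass to the limit via squeezing on $\nabla f(x^{k_j})^\top d^{k_j}_{TS}$ rather than extracting a further subsequence along which $d^k\to d^\star$; your version is slightly more direct but not substantively different.
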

\begin{proof} 
For the proof, see Appendix.
 
\end{proof}

We notice that, due to the use of the line search, there is no need to
make any particular assumption on the gradient of the objective
function (such as Lipschitz continuity) for proving
the convergence of Algorithm \texttt{NM-MFW}.

\subsection{Lower bound computation}
\label{sec:lbcomp}

When using Algorithm~\texttt{NM-MFW} within a branch-and-bound
framework as we will present in Section~\ref{bandb}, the availability of
valid dual bounds during the execution of~\texttt{NM-MFW} can help to
prune the current node before termination of the algorithm, and thus to
decrease the total running time of the branch-and-bound scheme.

Considering Problem~\eqref{ContRel0}, we can define the following dual
function~\citep{clarkson2010coresets,Jaggi2013} for all $x\in
S\setminus\{0\}$:
\begin{equation*}
  w(x):=\min_{z\in S} \big(f(x)+\nabla f(x)^\top (z - x)\big).
\end{equation*}
From the definition of~$w(x)$ and taking into account the convexity of
$f$, we have the following weak duality result:
\begin{equation}\label{duality}
  w(x)\leq f(x)+\nabla f(x)^\top (x^\star - x)\leq f(x^\star),
\end{equation}
where~$\xstar$ again denotes an optimal solution of
Problem~\eqref{ContRel0}. We thus obtain a dual bound in each
iteration for free, given by
\[
f(x^k)+\nabla f(x^k)^\top d^k\le w(x^k)=f(x^k)+ \min_{z\in S} \nabla f(x^k)^\top (z - x^k)=
f(x^k)+\nabla f(x^k)^\top d^k_{TS}.
\]
Note that this equation follows from how our direction is chosen, according to~\eqref{dirchoice} 
(see Section~\ref{sec:compdir} for further details). 
We can stop Algorithm~\texttt{NM-MFW}
as soon as~$f(x^k)+\nabla f(x^k)^\top d^k$ exceeds the current best upper bound in the
branch-and-bound scheme. Furthermore, strong duality holds
in~\eqref{duality} (in the sense that $w(x^\star) = f(x^\star)$); 
see e.g.~\citep{clarkson2010coresets} and the references therein.

\section{Branch-and-Bound algorithm}
\label{bandb}

In order to deal with integer variables in
Problem~\eqref{prob:gen_cap_bud}, we embedded Algorithm~\ref{NM-MFW}
into a branch-and-bound framework. Aiming at a fast enumeration of the
branch-and-bound tree, we follow the ideas that have been successfully
applied in, e.g.,~\citep{buchheim2016p}. In this
section, we give a short overview over the main features of the
branch-and-bound scheme.

\subsection{Branching and enumeration strategy}

At every node in our branch-and-bound scheme, we branch by fixing a
single integer variable to one of its feasible values. The enumeration
order of the children nodes is by increasing distance to the value of
this variables in the solution of the continuous relaxation $x^\star$,
computed by Algorithm~\ref{NM-MFW}. If the closest integer value to
$x^\star_i$ is $\lfloor x^\star_i\rfloor$, we thus consecutively fix
$x_i$ to integer values $\lfloor x^\star_i\rfloor,\lceil
x^\star_i\rceil, \lfloor x^\star_i\rfloor-1, \lceil
x^\star_i\rceil+1$, and so on. If the closest integer is $\lceil
x^\star_i\rceil$, we analogously start with fixing $x_i$ to 
the integer value
$\lceil x^\star_i\rceil$. By optimality of $x^\star$ and 
by the fact that the problem is convex,
the resulting lower bounds are non-decreasing when fixing to either
increasing values greater than $x^\star_i$ or decreasing values less
than $x^\star_i$. In particular, when being able to prune a node, all
siblings beyond this node can be pruned as well.

Once we arrive at level~$|I|$, all
integer variables are fixed and the problem reduces to the purely
continuous problem~\eqref{ContRel}. We refer to~\cite{{buchheim2016p}} 
and the references therein for further details on the branching strategy. 
\subsection{Lower bounds after fixing}

An advantage of branching by fixing variables as opposed to branching
by splitting up variable domains is that the subproblems in the
enumeration process of the search tree essentially maintain the same
structure.  Fixing a variable in Problem~\eqref{ContRel} just
corresponds to moving certain coefficients from the matrix~$M$ to a
linear or constant part under the square root, and from the vector~$r$
to a constant part outside the square root. More precisely, assume
that the variables with indices in~$J\subseteq I$ have
been fixed to values~$s=(s_i)_{i\in I}$. The problem then reduces to the
minimization of
\begin{equation}\label{eq:probfix}
  f_s:\mathbb{Z}^{|I|-|J|}\times\mathbb{R}^{n-|I|}\rightarrow\mathbb{R},\ x\mapsto h\Big(\sqrt{x^{\top}M_s x+ c_s^{\top}x+ d_s}\Big)-r_s^{\top}x-t_s
\end{equation}
over the feasible region
$\mathcal{F}_s=\{x\in\mathbb{Z}^{|I|-|J|}\times\mathbb{R}^{n-|I|}\mid
a_s^{\top}x\leq b_s,\ x\geq 0\}$, where the matrix $M_s$ is obtained
by deleting the rows and columns corresponding to~$J$, the
vector $a_s$ is obtained by deleting the columns corresponding to~$J$,
and the remaining terms are updated appropriately.

Note that the relaxation of Problem~\eqref{eq:probfix} has a slightly more general form
than the original Problem~\eqref{ContRel}, since the data~$c_s$
and~$d_s$ may be non-zero as a result of fixing variables. However,
the algorithm for solving Problem~\eqref{ContRel} discussed in
Section~\ref{sec:FW} can easily be applied to the relaxation of
Problem~\eqref{eq:probfix} as well, the only difference being in the
computation of the gradient. In fact, in case at least one variable
has been fixed to a non-zero value, we obtain~$d_s>0$ since~$M\succ
0$. In particular, the objective function becomes globally
differentiable in this case.

\subsection{Upper bounds}

As an initial upper bound in the branching tree, we use a simple
heuristic, adapted from a greedy heuristic by~\citet{julstrom2005} for
the quadratic knapsack problem.  Analogously to the notation used in
the theory of knapsack problems the profit ratio $p_i$ of an item $i$
is defined as the sum of all profits that one gains by putting item
$i$ into the knapsack, divided by its weight. Transferred to our
application, we have
$$p_i:=\Bigg(h\Big(\sqrt{m_{ii}+\textstyle 2\sum_{j\neq i}m_{ij}}\Big)-r_i\Bigg)/a_i$$
for all $i=1,\dots,n$. Julstrom proposed to sort all items in a non-decreasing 
order with respect to~$p_i$ and, starting from the first item, successively set $x_i=1$ until the capacity 
of the knapsack is reached. The remaining variables are set to zero.

We adapt this algorithm by allowing multiple copies of each item, i.e. $x_i=\lfloor\frac{\bar b}{a_i}\rfloor$, 
where $\bar b$ is the current capacity of the knapsack. 

During the branch-and-bound enumeration, we do not use any heuristics
for improving the primal bound, since the fast enumeration using a
depth-first search usually leads to the early identification of good
feasible solutions and hence to fast updates of the bound.  Once all
integer variables have been fixed, we compute the optimal solution of
the subproblem in the reduced continuous subspace.
 
\subsection{Warmstarts}

With the aim of speeding-up our branch-and-bound scheme, we use a
warmstart procedure by taking over information from the parent node.
For this, let $x^\star\in\R^d$ be the optimal solution in the parent node and
define $\tilde x\in\R^{d-1}$ by removing the entry of~$x^\star$ corresponding to
the variable that has been fixed last. If $\tilde
x$ is feasible for the current node relaxation, we always use it as a
starting point for \texttt{NM-MFW}, otherwise we choose one of the
following feasible points according to our chosen warmstarting rule:
\begin{itemize}
\item the first unit vector $e_1 = (1,0,\dots,0)\in\R^{d-1}$;\\[-1ex]
\item the projection $\tilde x_p$ of $\tilde x$ onto the feasible region;\\[-1ex]
\item or the unit vector
  $e_{\hat\imath}$ with
  ${\hat\imath}:=\textnormal{argmin}_i h\Big(\sqrt{m_{ii} + \sum_{j\neq i}
    2m_{ij}}\Big)-r_i$.
\end{itemize}
The resulting warmstarting rules are denoted by ($\tilde x \vee e_1$), ($\tilde x \vee \tilde x_{p}$), and ($\tilde x \vee e_{\hat\imath}$), respectively. This notation is meant to emphasize that we either use $\tilde x$ or, if not possible, one of the other choices depending on the selected rule.

Note that the point $\tilde x_p$ can be computed by the algorithm originally
proposed by~\citet{held1974} that was recently rediscovered
by~\citet{duchi2008}.  For the latter version the overall complexity
has been proved to be $\mathcal{O}(n^2)$. The unit vector~$e_{\hat\imath}$ is chosen by again adapting ideas of the greedy heuristic
by~\citet{julstrom2005}. It represents the vertex of $S$ where the 
potential increase of the objective function due to the remaining items 
$j\neq \hat\imath$ is minimized, if setting $x_{\hat\imath}=1$.

\section{Numerical experience}\label{sec:exp}

In order to investigate the potential of our algorithm \texttt{FW-BB} when
applied to Problem~\eqref{prob:gen_cap_bud}, we
implemented it in~C++ and Fortran~90 and performed an extensive
computational evaluation. As benchmark data set, we used historical
real-data capital market indices from the Standard\,\&\,Poor's~500
index (S\&P~500) that were used and made public
by~\citet{cesarone2009}. This data set was used for solving a
\emph{Limited Asset Markowitz (LAM)} model. For each of the 500 stocks
the authors obtained 265 weakly price data, adjusted for dividends,
from Yahoo Finance for the period from March 2003 to March 2008.
Stocks with more than two consecutive missing values were
disregarded. The missing values of the remaining stocks were
interpolated, resulting in an overall of 476 stocks. Logarithmic
weekly returns, expected returns and covariance matrices were computed
based on the period March 2003 to March 2007.

By choosing stocks at random from the 476 available ones, we built 
mixed-integer portfolio optimization instances of different sizes.  
Namely, we built
10 problems with 100, 10 with 150 and 10 with 200 stocks, considering
$|I|=\lfloor n/2\rfloor$ (so half of the variables are constrained to be integer). 
 We considered three different values
for~$b$, representing the budget of the investor, namely
$b_1:=1\cdot\sum_{i=1}^na_i$, $b_2:=10\cdot\sum_{i=1}^na_i$, and
$b_3:=100\cdot\sum_{i=1}^na_i$, yielding a total of 90 instances.

All experiments were carried out on Intel Xeon processors running at
2.60 GHz. All running times were measured in cpu seconds and the
time-limit was set to one cpu hour. In the following, we first present
a numerical evaluation related to our algorithm \texttt{FW-BB}: we
explore the benefits obtained from using the non-monotone line search
and using warmstart alternatives. Then, we present a comparison of
\texttt{FW-BB} with the MISOCP and the MIQP solver of \texttt{CPLEX 12.6},
for the two cases~$h(t)=\Omega t$ and $h(t)=t^2$, respectively.
Finally, to show the generality of our approach, we report the results
of numerical tests for a non-standard risk-weighting function~$h$.

\subsection{Benefits of the non-monotone line search and warmstarts}

The \texttt{NM-MFW}-algorithm devised in Section~\ref{sec:FW}
uses a non-monotone line search; in our implementation of
\texttt{FW-BB} we set $p_{nm}=1$. In order to show the benefits of the
non-monotone version of \texttt{FW-BB} we report in Table~\ref{tab:nm}
a comparison between the non-monotone version (\texttt{NM-FW-BB}) and
the monotone one (\texttt{M-FW-BB}), on instances with
$h(t)=\Omega t$ and budget constraint~$a^\top x\le b_3$. 
We considered $(\tilde x \vee \tilde x_p)$ as warmstart choice.
In Table~\ref{tab:nm} we report, for each dimension, the number of
instances solved within the time limit ($\#$), the average running
times (time), and the average numbers of iterations of \texttt{NM-MFW} 
in each node of the enumeration tree (it). 
All averages are taken over the set of instances solved within the 
time limit. 
Using the non-monotone line search,
\texttt{FW-BB} is able to solve a greater number of instances within the time limit.
Furthermore, \texttt{NM-FW-BB} gives in general better performance in terms 
of running times, while the number of iterations is very similar, showing the advantage 
of allowing stepsizes with a safeguarded growth of the objective function.

\begin{table}
	\centering      
	\begin{tabular}{|c||r|r|r||r|r|r|} 
	  \hline
	  \multicolumn{1}{|c||}{} & \multicolumn{3}{c||}{\texttt{NM-FW-BB}} & \multicolumn{3}{c|}{\texttt{M-FW-BB}}\\
	  $n$ & \# & time & it & \# & time & it\\
	  \hline
	  \hline
	  100  & 10  &   1.6  & 314.3 & 10 &  0.8    & 294.9\\
	  150  & 10  &   7.1  & 307.9 & 9  &  69.0   & 300.7\\
	  200  & 8   &   32.4 & 277.8 & 8  &  340.7  & 256.0\\
	  \hline
	\end{tabular}\vspace*{0.1cm}
	\caption{Comparison between non-monotone and monotone version of \texttt{FW-BB} on instances with $h(t)=\Omega t$, $\varepsilon=0.95$, $b=b_3$.}
	\label{tab:nm}  
    \end{table}

In order to investigate the benefits of the warmstart choices $(\tilde
x \vee e_1)$, $(\tilde x \vee \tilde x_p)$, $(\tilde x \vee
e_{\hat\imath})$, we again ran the different versions of \texttt{FW-BB} on instances with $h(t)=\Omega t$ and
budget constraint~$a^\top x\le b_3$.  We compare the three warmstart
possibilities presented above with the following alternatives:
\begin{tabbing}
($e_1$)~~ \= always choose $e_1$;\\
($e_{\hat\imath}$) \> always choose $e_{\hat\imath}$.
\end{tabbing}
In Table~\ref{tab:mixed-warmstart} we show the results related to the
five different starting point choices.
We can observe that the best choice  among those considered, according to the number of instances 
solved within the time limit, is~$(\tilde x \vee
\tilde x_{p})$.
We also observe that, when $n=200$,
choosing $(\tilde x \vee e_{1})$ is better than considering $e_{1}$ or
$e_{\hat\imath}$ as starting points, highlighting the benefits of
using warmstarts.

\begin{table}
      \centering
      \begin{tabular}{|c||r|r||r|r||r|r||r|r||r|r|}
        \hline
        \multicolumn{1}{|c||}{} & \multicolumn{2}{c||}{\texttt{$e_1$}} & \multicolumn{2}{c||}{\texttt{$e_{\hat\imath}$}} 
        & \multicolumn{2}{c||}{\texttt{$\tilde x\vee e_1$}} & \multicolumn{2}{c||}{\texttt{$\tilde x\vee e_{\hat\imath}$}}
        & \multicolumn{2}{c|}{\texttt{$\tilde x\vee \tilde x_p$}} \\
        $n$ & \# & time  & \# & time  &\# & time  & \# & time &  \# & time  \\
        \hline
        \hline
        100 & 10 & 0.2  & 10 & 0.8   & 10 & 0.8  & 10& 1.6  & 10 & 0.5  \\
        150 & 10 & 3.8  & 10 & 3.9   & 10 & 7.1  & 10& 7.1  & 10 & 6.1  \\
        200 & 7 & 220.2 & 7  & 223.5 & 8  & 31.7 & 8 & 32.4 & 9 & 46.0 \\
        \hline
      \end{tabular}\vspace*{0.1cm}
      \caption{Comparison on different warmstart strategies on instances with $h(t)=\Omega t$, $\varepsilon=0.95$, $b=b_3$.}
      \label{tab:mixed-warmstart}
  \end{table}

\subsection{Comparison with \texttt{CPLEX 12.6}}

In this section, we present a numerical comparison on instances with
$h(t)=\Omega t$ and $h(t)=t^2$. We compare \texttt{FW-BB} with the
MISOCP and the MIQP solver of \texttt{CPLEX 12.6},
respectively. Concerning \texttt{FW-BB}, we consider the two
non-monotone versions, \texttt{FW-BB-P} and \texttt{FW-BB-G}, using
$(\tilde x \vee \tilde x_{p})$ and $(\tilde x \vee e_{\hat\imath})$,
respectively.  We use an absolute optimality tolerance of $10^{-10}$
for all algorithms.

\paragraph{Comparison on instances with $h(t)= \Omega t$.}
In order to compare \texttt{FW-BB} with \texttt{CPLEX 12.6}, 
we modeled~\eqref{prob:gen_cap_bud} as an equivalent mixed-integer second-order cone program (MISOCP):
  \begin{align*}
  -\min\left\{y-r^{\top}x:a^\top x \leq b,\ \Omega\sqrt{x^{\top}M x}\leq y,\ x\geq 0,\ x_i\in \Z,\ i=1,\dots,|I|,\ y\in\R\right\}.
  \end{align*}
We chose $\Omega=\sqrt{(1-\varepsilon)/\varepsilon}$, where
$\varepsilon\in\{0.91,0.95,0.99\}$.  The value of $\varepsilon$
controls the amount of risk the investor is willing to take.  In
theory, $\varepsilon$ can take any value in (0,1], where a small value
  implies a big weight on the risk-term and $\varepsilon=1$ means
  that the risk is not taken into account. Numerical tests on single
  instances showed that any value of $\varepsilon$ in (0,0.9] leads to
    the trivial optimal solution zero, i.e.\ not investing anything is
    the optimal decision for the investor.  Therefore, we restricted
    our experiments to the three values of $\varepsilon$ mentioned
    above.

In~Table~\ref{tab:mixOm}, we report for each algorithm the following
data: numbers of instances solved within the time limit ($\sharp$), average
running times (time), average numbers of branch-and-bound nodes (nodes). All
averages are taken over the set of instances solved within the time
limit. 
We show the computational results for the
three different values of $\varepsilon$ and $b$. We
can see that \texttt{FW-BB} suffers from an increasing right
hand side~$b$,  which however holds for~\texttt{CPLEX 12.6} as well,
even to a larger extent. The choice of~$\varepsilon$ does not
significantly effect the performance of~\texttt{FW-BB}, while~\texttt{CPLEX 12.6} 
performs better on instances
with large~$\varepsilon$.  Altogether, we can observe that~\texttt{FW-BB-P} 
is able to solve the largest number of instances
within the time limit.  When the number of solved instances is the same,
both version of~\texttt{FW-BB} outperform the MISOCP solver of~\texttt{CPLEX 12.6} in terms of cpu time.
Note that the average number of branch-and-bound nodes in~\texttt{FW-BB} is much larger than 
that needed by~\texttt{CPLEX 12.6}. This highlights how solving the continuous relaxations 
by~\texttt{NM-FW-BB} leads to a fast enumeration of the branch-and-bound nodes.
Besides~Table~\ref{tab:mixOm}, we  visualize our running time results by performance profiles  
in  Figure~\ref{fig:perfprof}, as proposed in~\cite{PP2002}. They  confirm that, in terms of 
cpu time,~\texttt{FW-BB-P} outperforms the MISOCP solver of~\texttt{CPLEX 12.6} significantly.
 
In our experiments, we noticed that in some cases \texttt{FW-BB} and
\texttt{CPLEX} provide slightly different minimizers, yielding
slightly different optimal objective function values. While on certain
instances the optimal solution of \texttt{FW-BB} is slightly superior
to \texttt{CPLEX}, on other instances it is the other way round. We
observed a relative difference from the best solution of the order of 
$10^{-3}$.

    \begin{table}
	\centering      
	\begin{tabular}{|c|c|c||r|r|r||r|r|r||r|r|r|} 
	  \hline
	  \multicolumn{3}{|c||}{inst} & \multicolumn{3}{c||}{\texttt{FW-BB-P}} & \multicolumn{3}{c||}{\texttt{FW-BB-G}}
          & \multicolumn{3}{c|}{\texttt{\texttt{CPLEX 12.6}}} \\
	  $n$ & $\varepsilon$ & $b$ & \# & time & nodes & \# & time & nodes & \# & time & nodes \\
	  \hline
	  \hline
	  100 &  0.91 & $b_1$ & 10 & 0.17 & 1.61e+03 & 10 & 0.33 & 1.63e+03 & 10 & 17.00  & 3.81e+03\\
	  100 &  0.91 & $b_2$ & 10 & 0.09 & 8.29e+02 & 10 & 0.22 & 8.33e+02 & 10 & 279.15 & 7.90e+03\\
	  100 &  0.91 & $b_3$ & 10 & 0.30 & 3.74e+02 & 10 & 0.42 & 4.28e+02 & 3 & 51.01  & 2.77e+03 \\ \hline
	  100 &  0.95 & $b_1$ & 10 & 0.02 & 2.59e+02 & 10 & 0.04 & 2.65e+02 & 10 & 1.89   & 4.66e+02  \\
	  100 &  0.95 & $b_2$ & 10 & 0.04 & 3.19e+02 & 10 & 0.09 & 3.14e+02 & 10 & 59.10  & 2.98e+03 \\
	  100 &  0.95 & $b_3$ & 10 & 0.47 & 5.87e+02 & 10 & 1.57 & 2.08e+03 & 5 & 364.90   & 4.39e+03 \\ \hline
	  100 &  0.99 & $b_1$ & 10 & 0.01  & 1.70e+02 & 10 & 0.01 & 1.78e+02 & 10 & 0.15    & 3.70e+01 \\
	  100 &  0.99 & $b_2$ & 10 & 0.04  & 5.81e+02 & 10 & 0.04 & 6.64e+02 & 10 & 0.70    & 1.85e+02  \\
	  100 &  0.99 & $b_3$ & 10 & 16.51 & 1.57e+04 & 10 & 260.60 & 3.71e+05& 9 & 503.62     & 1.03e+04\\ \hline\hline
	  150 &  0.91 & $b_1$ & 10 & 0.14  & 6.56e+02 & 10  & 4.45 & 1.06e+04   & 10 & 52.53     & 3.18e+03\\
	  150 &  0.91 & $b_2$ & 10 & 0.40 & 1.73e+03  & 10  & 46.4 & 4.83e+04   & 6 & 707.94     & 6.70e+03 \\
	  150 &  0.91 & $b_3$ & 10 & 2.15 & 2.01e+03  & 9   & 1.77 & 1.41e+03   & 5 & 47.32  & 1.81e+03  \\ \hline
	  150 &  0.95 & $b_1$ & 10 & 0.15 & 9.76e+02 & 10 & 0.24 & 1.05e+03 & 10 & 11.49    & 1.04e+03\\
	  150 &  0.95 & $b_2$ & 10 & 0.17 & 6.75e+02 & 10 & 5.70 & 8.89e+03 & 8 & 225.78   & 3.04e+03\\
	  150 &  0.95 & $b_3$ & 10 & 6.14 & 6.15e+03 & 10 & 7.11 & 6.16e+03 & 5 & 834.79   & 6.23e+03\\ \hline
	  150 &  0.99 & $b_1$ & 10 & 0.04 & 2.56e+02 & 10 & 0.06 & 2.66e+02 & 10 & 35.08    & 5.81e+02 \\
	  150 &  0.99 & $b_2$ & 10 & 0.10 & 2.20e+02 & 10 & 0.23 & 5.08e+02  & 10 & 6.69    & 6.14e+02 \\
	  150 &  0.99 & $b_3$ & 10 & 0.78 & 8.67e+02 & 10 & 0.82 & 8.80e+02 & 9 & 422.15    & 3.53e+03\\ \hline\hline
	  200 &  0.91 & $b_1$ & 10 & 4.81 & 1.71e+04  & 10 & 5.80   & 1.35e+04& 10 & 465.62    & 9.48e+03 \\
	  200 &  0.91 & $b_2$ & 9  & 19.83 & 7.89e+04 & 9  & 116.78 & 1.89e+05 & 3 & 879.46   & 9.14e+03\\
	  200 &  0.91 & $b_3$ & 10 & 22.99 & 1.86e+04 & 10 & 32.44  & 2.20e+04 & 3 & 204.92   & 3.80e+03 \\ \hline
	  200 &  0.95 & $b_1$ & 10 & 0.37 & 1.33e+03 & 10 & 0.54 & 1.29e+03 & 10 & 75.50    & 3.46e+03 \\
	  200 &  0.95 & $b_2$ & 10 & 0.82 & 1.38e+03 & 10 & 1.40 & 1.48e+03 & 5 & 44.64 & 1.55e+03  \\
	  200 &  0.95 & $b_3$ & 9 & 45.98 & 3.74e+04 & 8 & 32.39 & 2.05e+04 & 7 & 38.77 & 2.42e+03 \\ \hline
	  200 &  0.99 & $b_1$ & 10 & 2.17  & 1.57e+04 & 10 & 2.00 & 1.57e+04 & 10 & 2.44    & 2.76e+03 \\
	  200 &  0.99 & $b_2$ & 10 & 0.49  & 6.04e+02 & 10 & 0.95 & 9.12e+02 & 9 & 277.08  & 2.61e+03 \\
	  200 &  0.99 & $b_3$ & 10 & 11.14 & 1.06e+04 & 9  & 67.57 & 5.90e+04 & 9 & 183.80 & 2.13e+03\\ 
	  \hline
	\end{tabular}\vspace*{0.1cm}
	\caption{Comparison of \texttt{FW-BB} and \texttt{CPLEX 12.6} on instances with $h(t)=\Omega t$.}
	\label{tab:mixOm}  
    \end{table}

\begin{figure}[h!]
  \begin{center}
    \psfrag{eps = 0.91}[cc][cc]{$\varepsilon=0.91$}
    \psfrag{eps = 0.95}{$\varepsilon=0.95$}
    \psfrag{eps = 0.99}{$\varepsilon=0.99$}
    \psfrag{CPLEX 12.6}[lc][lc]{\scriptsize\texttt{CPLEX 12.6}}
    \psfrag{FW-BB-P}[lc][lc]{\scriptsize\texttt{FW-BB-P}}
    \psfrag{FW-BB-G}[lc][lc]{\scriptsize\texttt{FW-BB-G}}
    \includegraphics[trim = 1.5cm 0cm 1.5cm 0mm, clip, width=1.0\textwidth]{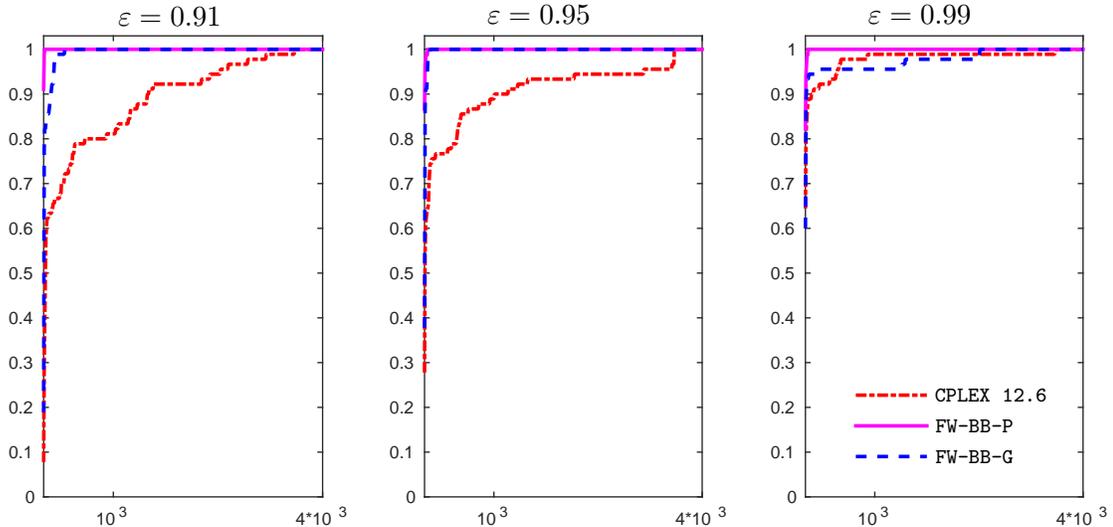}\\
    \caption{Comparison of \texttt{FW-BB} and \texttt{CPLEX 12.6}: performance profiles with respect to running times for different values of $\varepsilon$.}
    \label{fig:perfprof}
  \end{center}
\end{figure}

\paragraph{Comparison on instances with $h(t)= \Omega t^2$.}
If we consider as risk-weighting function $h(t)= \Omega t^2$, Problem~\eqref{prob:gen_cap_bud} 
reduces to a convex quadratic mixed-integer problem, and the objective function is differentiable
everywhere in the feasible set.
In Table~\ref{tab:mixQuad} we report the comparison among \texttt{FW-BB-P}, \texttt{FW-BB-G} and the MIQP solver of \texttt{CPLEX 12.6}.
We considered $\Omega = 1$.
All algorithms were able to solve all the instances very quickly.
The MIQP solver of \texttt{CPLEX 12.6} shows the best cpu times,
although both versions of \texttt{FW-BB}
are also very fast, even if they enumerate a higher number of
nodes. 

We would like to remark that our branch-and-bound algorithm does not
exploit the (quadratic) structure of the objective function, since it
is  designed to solve a more general class of problems than MIQPs.
Nevertheless, the algorithm gives competitive results also when dealing with those problems.

	\begin{table}
      \centering    
	\begin{tabular}{|c|c||r|r|r||r|r|r||r|r|r|} 
	  \hline
	  \multicolumn{2}{|c||}{inst} & \multicolumn{3}{c||}{\texttt{FW-BB-P}} & \multicolumn{3}{c||}{\texttt{FW-BB-G}} & \multicolumn{3}{c|}{\texttt{CPLEX 12.6}} \\
	  $n$ & $b$ & \# & time & nodes & \# & time & nodes & \# & time & nodes \\
	  \hline
	  \hline
	  100 & $b_1$ & 10 &     0.06 & 4.10e+02 & 10 &     0.06 & 4.10e+02 & 10 & 0.04 & 1.12e+01 \\
	  100 & $b_2$ & 10 &     0.07 & 6.38e+02 & 10 &     0.07 & 6.38e+02 & 10 & 0.03 & 1.58e+01 \\
	  100 & $b_3$ & 10 &     0.23 & 9.86e+02 & 10 &     0.23 & 9.86e+02 & 10 & 0.03 & 2.59e+01 \\ \hline
	  150 & $b_1$ & 10 &     0.12 & 7.66e+02 & 10 &     0.12 & 7.65e+02 & 10 & 0.06 & 1.92e+01\\
	  150 & $b_2$ & 10 &     0.19 & 9.76e+02 & 10 &     0.18 & 9.76e+02 & 10 & 0.06 & 2.06e+01\\
	  150 & $b_3$ & 10 &     0.19 & 8.80e+02 & 10 &     0.19 & 8.81e+02 & 10 & 0.06 & 9.90e+00\\ \hline
	  200 & $b_1$ & 10 &     0.61 & 3.28e+03 & 10 &     0.61 & 3.28e+03 & 10 & 0.11 & 2.07e+01\\
	  200 & $b_2$ & 10 &     0.94 & 5.24e+03 & 10 &     0.91 & 5.24e+03 & 10 & 0.11 & 1.91e+01 \\
	  200 & $b_3$ & 10 &     0.41 & 1.46e+03 & 10 &     0.42 & 1.46e+03 & 10 & 0.12 & 2.40e+01\\ 
	  \hline
	\end{tabular}\vspace*{0.1cm}
	\caption{Comparison of \texttt{FW-BB} and \texttt{CPLEX 12.6} on instances with $h(t)=t^2$.}
	\label{tab:mixQuad}
      \end{table}

\subsection{Results with a non-standard risk-weighting function}

As a further experiment, we tested our instances
considering a different risk-weighting function $h:\R_+ \rightarrow
\R$, namely
\[h_{exp}(t)=\begin{cases}0 &\ t\leq\gamma \\ \exp(t-\gamma)-(t-\gamma+1) &\ t>\gamma,\end{cases}\]
such that the investor's risk-aversion increases exponentially in the
risk after exceeding a certain threshold value~$\gamma$. In
Table~\ref{tab:minth}, we report the results of \texttt{FW-BB-P},
considering three choices~$\gamma \in\{ 0,\, 1,\, 10\}$.  We observe
that for both~$\gamma = 0$ and $\gamma = 1$ our algorithm
\texttt{FW-BB-P} is able to solve all instances within the time limit,
and that instances get more difficult for \texttt{FW-BB-P} with
increasing~$\gamma$.

      \begin{table}[h!]
	\centering  
	\begin{tabular}{|c|c||r|r|r||r|r|r||r|r|r|} 
	  \hline
	  \multicolumn{2}{|c||}{inst} & \multicolumn{3}{c||}{\texttt{$\gamma = 0$}} & 
	   \multicolumn{3}{c||}{\texttt{$\gamma = 1$}} 
	  & \multicolumn{3}{c|}{\texttt{$\gamma = 10$}}\\
	  $n$ & $b$ & \# & time & nodes & \# & time & nodes & \# & time & nodes  \\
	  \hline
	  \hline

  100	&	$b_1$	&	10	&	0.09	&	4.8e+02	&	10	&	0.17	&	7.2e+02	&	10	&	0.09	&	5.4e+02		\\
  100	&	$b_2$	&	10	&	0.07	&	4.1e+02	&       10	&	0.27	&	7.5e+02	&	10	&	243.87	&	3.3e+05		\\
  100	&	$b_3$	&	10	&	0.30	&	8.6e+02	&	10	&	31.14	&	5.1e+04	&	5	&	401.57	&	6.2e+05		\\ \hline
  150	&	$b_1$	&	10	&	0.17	&	9.0e+02	&	10	&	1.31	&	4.5e+03	&	10	&	0.19	&	3.2e+02		\\
  150	&	$b_2$	&	10	&	0.33	&	1.5e+03	&	10	&	2.52	&	7.9e+03	&	10	&	193.40	&	2.0e+04		\\
  150	&	$b_3$	&	10	&	0.56	&	2.2e+03	&	10	&	6.50	&	1.2e+04	&	4	&	565.76	&	7.1e+05		\\ \hline
  200	&	$b_1$	&	10	&	1.41	&	6.6e+03	&	10	&	14.96	&	4.7e+04	&	10	&	7.40	&	7.7e+03		\\
  200	&	$b_2$	&	10	&	1.09	&	3.2e+03	&	10	&	50.47	&	1.1e+05	&	7	&	929.46	&	7.9e+05		\\
  200	&	$b_3$	&	10	&	0.82	&	2.5e+03	&	10	&	30.25	&	3.0e+04	&	5	&	138.07	&	1.5e+05		\\

	  \hline
	\end{tabular}\vspace*{0.1cm}
	\caption{Results with an exponential risk-weighting function.}
	\label{tab:minth}  
    \end{table}

In order to investigate the influence of the risk-weighting function on the
optimal solution, we compared different functions for an
instance of dimension $n = 100$ under the constraint $a^\top x \le
b_1$. The results are given in Table~\ref{tab:ofvh}. We report, for
each risk-weighting function $h(t)$ depending on a specific risk
parameter (risk-par), the objective function value obtained (obj), the
value of the return term in the objective function evaluated at the optimal solution ($r^\top x^\star$),
the number of non-zero entries in the optimal
solution~($\|x^\star\|_0$), and the maximal entry in the optimal
solution~($\|x^\star\|_\infty$).
\begin{table} [h!]
  \centering     
  \begin{tabular}{|c|l||r|r|r|r|} 
    \hline
    $h(t)$ & risk-par & obj & $r^\top x^\star$ & $\|x^\star\|_0$ & $\|x^\star\|_\infty$\\
    \hline\hline
    $\Omega\, t$ & $\eps = 0.91 $ & 0.3684 & 2.2452 & 16 & 58 \\
    & $\eps = 0.95 $ & 1.4454& 6.0911 & 4 & 280\\
    & $\eps = 0.99 $ & 4.2161& 6.4523 & 3 & 320\\
    \hline
    $\Omega\, t^2$  & $ \Omega = 1$ &0.0513 & 0.1021& 16 & 2\\
    \hline
    $h_{exp}$ &$\gamma = 0$ & 0.0905 & 0.1715& 15 & 3.14 \\
    &$\gamma = 1$ & 0.5258 & 0.5900& 16 & 11.87\\
    &$\gamma = 10$ & 3.4991 & 3.5348& 7 & 113\\
    \hline
  \end{tabular}\vspace*{0.1cm}
  \caption{Results on a mixed-integer instance with $n=100$ for different risk-weighting functions.}
  \label{tab:ofvh}  
\end{table}

Not surprisingly, the results show that a larger weight on the risk-term
leads to a smaller expected return in the optimal solution. At the same time, a
large weight on the risk favors a diversified portfolio, so that the
number of non-zeros increases with the weight on the risk, at the same and 
the maximal amount invested into a single investment
decreases. However, the precise dependencies are defined by the
function~$h$. In Figure~\ref{fig:contour}, we show contour plots
for the different types of functions~$h(t)$ considered here.
\begin{figure}[h!]
  \begin{center}
     \includegraphics[trim = 2.9cm 1.0cm 2.0cm 0mm, clip, width=1.0\textwidth]{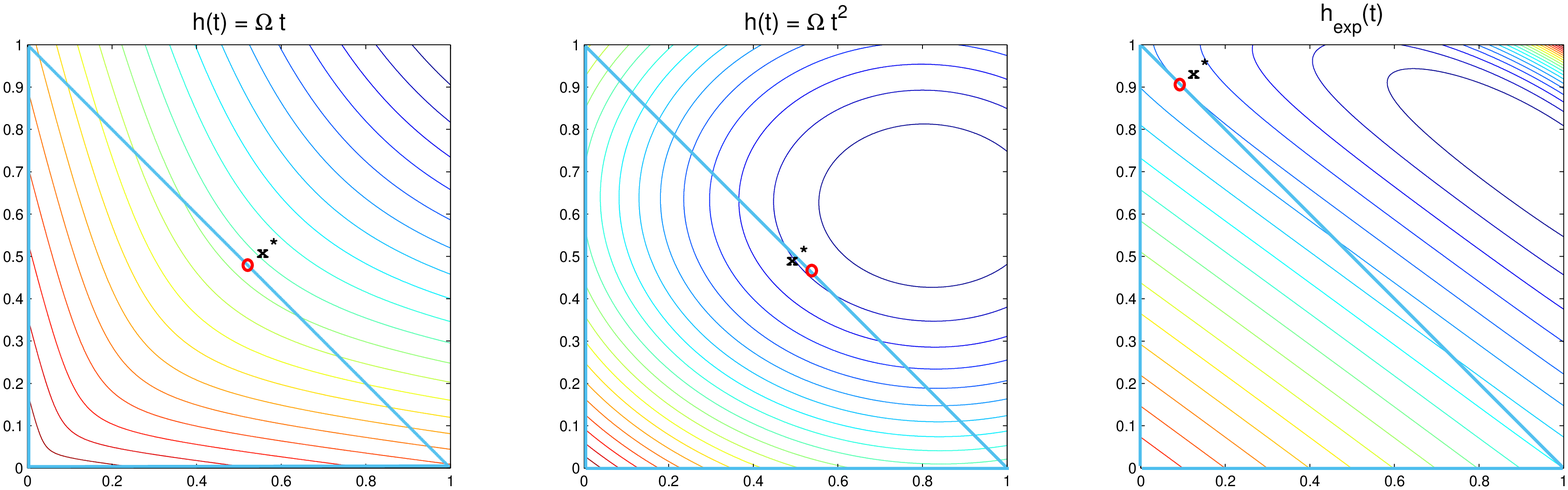}\\
    \caption{Contour plots of $f = h(\sqrt{x^\top M x}) - r^\top x$ for different risk-weighting functions.}
    \label{fig:contour}
  \end{center}
\end{figure}

\section{Conclusions}\label{sec:concl}

We presented a branch-and-bound algorithm for a large class of convex
mixed-integer minimization problems arising in portfolio optimization.
Dual bounds are obtained by a modified version of the Frank-Wolfe
method. This is motivated mainly by
two reasons. On the one hand, the Frank-Wolfe algorithm, at each iteration, gives
a valid dual bound for the original mixed-integer problem, therefore
it may allow an early pruning of the node. On the other hand, the cost
per iteration is very low, since the computation of the descent
direction and the update of the objective function can be performed in
a very efficient way. Furthermore, the devised Frank-Wolfe method
benefits from the use of a non-monotone Armijo line search. Within the
branch-and-bound scheme, we propose different warmstarting
strategies. The branch-and-bound algorithm has been tested on a set of
real-world instances for the capital budgeting problem, considering
different classes of risk-weighting functions. Experimental results show that
the proposed approach significantly outperforms the MISOCP solver of
\texttt{CPLEX 12.6} for instances where a linear risk-weighting function is
considered.

\section{Appendix}\label{sec:appendix}
\appendix{
\par\noindent
Proof of Lemma~\ref{lemma1}:
\begin{proof}
  First note that the definition of~$\bar f^k$ ensures~$\bar
f^k\le f(x^0)$ and hence~$f(x^k)\le f(x^0)$ for all~$k$, which
proves~(i). For~(ii), we have that
\begin{equation*}
  \bar f^{k+1} = \max\limits_{0\le i\le \min\{p_{nm},k+1\}} f(x^{k + 1
    -i}) \le \max\{ \bar f^k , f(x^{k+1})\}.
\end{equation*}
Since $f(x^{k+1})<\bar f^k$ by the definition of the line search, we
derive $\bar f^{k+1} \le \bar f^k$, which proves that the sequence $\{
\bar f^k\}_{k\in\N}$ is non-increasing. By~(i), this sequence is
bounded from below by the minimum of~$f$ on $\mathcal L(x^0)\cap S$,
which exists by Lemma~\ref{basiclemma}, and hence converges.

\end{proof}

\par\medskip\noindent
Proof of Lemma~\ref{lemma2}:
\begin{proof}
 For each~$k\in\N$, choose $t^k\in\{k - \min(k,p_{nm}),\dots,k\}$ with $\bar f^k = f(x^{t^k})$.
We prove by induction that for any fixed integer $i\ge 0$ 
we have 
\begin{equation}\label{induction}
  \lim_{k\rightarrow \infty} f(x^{t^k - i}) = \lim_{k\rightarrow \infty} f(x^{t^k}) = \lim_{k\rightarrow \infty} \bar f^k = \bar f.
\end{equation}
Suppose at first $i = 0$. Then~\eqref{induction} follows from Lemma~\ref{lemma1}. 

We now assume that \eqref{induction} holds for $i\ge 0$ and we prove
that it holds for index $i+1$. We have
\begin{equation*}
  f(x^{t^k - i})  \le \bar f^{t^k -i-1} + \gamma_1  \alpha^{t^k -i -1} 
\nabla f(x^{t^k - i - 1})^\top d^{t^k - i - 1} - \gamma_2 (\alpha^{t^k -i-1})^2\|d^{t^k -i-1}\|^2,
\end{equation*}
so that the same reasoning as before yields
\begin{equation}\label{ind1}
  f(x^{t^k - i}) - \bar f^{t^k -i -1} \le - \gamma_2 (\alpha^{t^k -i-1})^2\|d^{t^k -i-1}\|^2.
\end{equation}
The left hand side of~\eqref{ind1} converges to zero since~\eqref{induction} holds for $i$ and the term $f(x^{t^k - i})$ converges 
to $\bar f$ (by the inductive hypothesis), 
as well as $\bar f^{t^k -i -1}$ because of Lemma~\ref{lemma1}
(and the fact that $k - (t^k - i -1)$ is bounded by $p_{nm} + i + 1$). Then,
$$\lim_{k\rightarrow \infty} (\alpha^{t^k -i - 1})^2 \|d^{t^k -i
  -1}\|^2 = 0,$$
so that
$\lim_{k\rightarrow \infty}  \| x^{t^k -i} - x^{t^k -i - 1} \| = 0$. Again, uniform continuity 
of $f(x)$ over $\mathcal L(x^0)\cap S$ yields~\eqref{induction} for index $i+1$.

To conclude the proof, let $T^k = t^{k + p_{nm} + 1}$ and note that for
any~$k$ we can write
\begin{equation*}
 f(x^k) = f(x^{T^k}) - \sum_{i=0}^{T^k - k -1}(f(x^{T^k - i}) -  f(x^{T^k - i -1})).
\end{equation*}
Therefore, 
since the summation vanishes and $f(x^{T^k})= \bar f^{k+p_{nm}+1}$ converges to $\bar f$ from 
Lemma~\ref{lemma1}, taking the limit for $k\rightarrow \infty$ and
observing~$T^k - k -1\le p_{nm}$ we obtain the result.

\end{proof}

\par\medskip\noindent
Proof of Lemma~\ref{prop:limgraddir}:
\begin{proof}
First note that $\nabla f(x^k)^\top d^k < 0$ for all~$k\in\N$.  Let
$\alpha^k$ be the stepsize used by \texttt{NM-MFW} at
iteration $k$. Then,
$$\bar f^k -  f( x^k+ \alpha^k d^k )\ge  \gamma_1\, \alpha^k\, |\nabla f(x^k)^{\top} d^k| + \gamma_2\,(\alpha^k)^2\,\|d^k\|^2 
\geq  \gamma_1\, \alpha^k\, |\nabla f(x^k)^{\top} d^k|\ge 0.$$
By Lemma~\ref{lemma2}, the left hand side converges to zero, hence
\begin{equation}\label{hello}
  \lim_{k\rightarrow \infty} \alpha^k\, |\nabla f(x^k)^{\top} d^k| = 0.
\end{equation}
Since~$f$ is continuously differentiable on the compact set~$\mathcal
L(x^0)\cap S$ by Lemma~\ref{basiclemma} and~$d^k$ is bounded on~$S$,
the sequence~$\nabla f(x^k)^\top d^k$ is bounded. It thus suffices to
show that any convergent subsequence of~$\nabla f(x^k)^\top d^k$
converges to zero.

We assume by contradiction that a subsequence exists with
\begin{equation*}
  \lim_{i\rightarrow \infty} \nabla f(x^{k_i})^{\top} d^{k_i} =
  -\eta < 0.
\end{equation*}
Since the sequences~$\{x^k\}_{k\in\N}$ and~$\{d^k\}_{k\in\N}$ are
bounded, we can switch to an appropriate subsequence and assume that
$\lim_{k\rightarrow \infty} x^k = \bar x$ and $\lim_{k\rightarrow
  \infty} d^k = \bar d$ exist. From~\eqref{hello} we obtain
\begin{equation}\label{alphazero}
  \lim_{k\rightarrow \infty}\alpha^k=0,
\end{equation}
and the continuity of the
gradient in $\mathcal L(x^0)\cap S$ implies
$$\nabla f(\bar x)^\top \bar d = \lim_{k\rightarrow \infty} \nabla f(x^k)^\top d^k = -\eta < 0.$$
Since $\alpha_{max}\ge\beta >0$ and the sequence $\alpha^k$ is converging to zero, a value $\bar k \in \N$ 
exists such that $\alpha^k< \alpha_{max}$, for $k\ge \bar k$.
In other words, for $k\ge \bar k$  the stepsize $\alpha^k$ cannot be set equal to the maximum stepsize 
and, taking into account the non-monotone Armijo line search, we can write
$$f\Big( x^k+ \frac{\alpha^k}{\delta} d^k \Big)> \bar f^k+\gamma_1\, \frac{\alpha^k}{\delta}\,
\nabla f(x^k)^{\top} d^k - \gamma_2\,\Big(\frac{\alpha^k}{\delta}\Big)^2\,\|d^k\|^2.$$
Hence, due to the fact that $\bar f^k \ge f(x^k)$, we get
\begin{equation}\label{alphadelta}
  f\Big( x^k+ \frac{\alpha^k}{\delta} d^k \Big) - f(x^k)> \gamma_1\, \frac{\alpha^k}{\delta}\, 
  \nabla f(x^k)^{\top} d^k - \gamma_2\,\Big(\frac{\alpha^k}{\delta}\Big)^2\,\|d^k\|^2. 
\end{equation}
Since $f$ is continuously differentiable in $\mathcal L(x^0)\cap S$, we can apply the Mean Value Theorem and we have that $s_k\in [0,1]$ exists such that
\begin{equation}\label{meanvaltheo}
  f\Big( x^k+ \frac{\alpha^k}{\delta} d^k \Big) = f(x^k) + \frac{\alpha^k}{\delta}\nabla f\Big( x^k+ s_k\frac{\alpha^k}{\delta} d^k \Big)^\top d^k.
\end{equation}
In particular, we have $\lim_{k\rightarrow \infty} x^k+
s_k\frac{\alpha^k}{\delta} d^k = \bar x$, by~\eqref{alphazero} and
since~$s_k$ and~$d^k$ are bounded.  By substituting~\eqref{meanvaltheo}
within~\eqref{alphadelta} we have
$$
\nabla f\Big( x^k+ s\frac{\alpha^k}{\delta} d^k \Big)^\top d^k > \gamma_1\, 
\nabla f(x^k)^{\top} d^k - \gamma_2\,\frac{\alpha^k}{\delta}\,\|d^k\|^2.
$$
Considering the limit on both sides we get
$$-\eta = \nabla f(\bar x)^\top \bar d > \gamma_1\, \nabla f(\bar x)^\top \bar d = - \gamma_1 \eta$$
which is a contradiction since $\gamma_1\in (0, \frac{1}{2})$ and $-\eta<0$.

\end{proof}

\par\medskip\noindent
Proof of Theorem~\ref{Theo:convergence}:
\begin{proof}
If \texttt{NM-MFW} does not stop in a finite number of
iterations at an optimal solution, from Lemma~\ref{prop:limgraddir} we
have that
$$
\lim_{k\rightarrow \infty} \nabla f(x^k)^\top d^k =0.
$$
Let~$\xstar$ be any limit point of~$\{x^k\}_{k\in\N}$. Since the
sequence $\{d^k\}_{k\in\N}$ is bounded, we can switch to an
appropriate subsequence and assume that
$$\lim_{k\rightarrow \infty} x^k = \xstar;\quad\lim_{k\rightarrow \infty} d^k = d^\star.$$
Therefore
$$\nabla f(\xstar)^\top d^\star = \lim_{k\rightarrow \infty} \nabla f(x^k)^\top d^k =0.$$
From the definition of $d^k$ (implied by~\eqref{dirchoice} and the definition of $d^{TS}$) we have
$$ \nabla f(x^k)^\top d^k \le \nabla f(x^k)^\top (x - x^k) \quad \forall\; x\in S.$$
Taking the limit for $k\rightarrow \infty$ yields
$$0 = \nabla f(\xstar)^\top d^\star \le  \nabla f(\xstar)^\top (x - \xstar)\quad \forall\; x\in S,$$
showing that $\xstar$ is an optimal solution for Problem~\eqref{ContRel0}.

\end{proof}
}

%

\end{document}